\documentclass{article}

\usepackage{amsmath,xspace,comment,amssymb,amsthm,mathtools}
\setlength{\textwidth}{6.3in}
\setlength{\textheight}{8.7in}
\setlength{\topmargin}{0pt}
\setlength{\headsep}{0pt}
\setlength{\headheight}{0pt}
\setlength{\oddsidemargin}{0pt}
\setlength{\evensidemargin}{0pt}
\newcommand\blfootnote[1]{%
  \begingroup
  \renewcommand\thefootnote{}\footnote{#1}%
  \addtocounter{footnote}{-1}%
  \endgroup
}

\newcommand{\bG}{\mathbb{G}}
\newcommand{\bH}{\mathbb{H}}
\newcommand{\ba}{\backslash}
\newcommand{\si}{\sigma}
\newcommand{\F}{\mathcal{F}}
\newcommand{\NP}{\text{\textit{NP}}_3}
\newcommand{\BR}{\textit{BR}}
\newtheorem{theorem}{Theorem}[section]
\newtheorem{corollary}[theorem]{Corollary}
\newtheorem{lemma}[theorem]{Lemma}
\newtheorem{proposition}[theorem]{Proposition}

\theoremstyle{definition}
\newtheorem{definition}[theorem]{Definition}
\newtheorem{remark}{Remark}

\title{Irreducibility of the Tutte polynomial of an embedded graph}

\author{Joanna A. Ellis-Monaghan\\
\small Korteweg-de Vries Instituut voor Wiskunde\\[-0.8ex]
\small Universiteit van Amsterdam, Science Park 105-107, 1098 XG\\[-0.8ex]
\small Amsterdam, The Netherlands\\
\small\tt j.a.ellismonaghan@uva.nl
\and
Andrew J. Goodall\\
\small Computer Science Institute (I\'UUK), Charles University\\[-0.8ex]
\small Malostransk\'e n\'am. 25,\\[-0.8ex]
\small 118 00 Praha 1, Czech Republic\\
\small\tt andrew@iuuk.mff.cuni.cz
\and
Iain Moffatt\\
\small Department of Mathematics \\[-0.8ex]
\small Royal Holloway, University of London\\[-0.8ex]
\small Egham, UK\\
\small\tt iain.moffatt@rhul.ac.uk
\and
Steven Noble\\
\small Birkbeck, University of London\\[-0.8ex]
\small Malet Street\\[-0.8ex]
\small London, UK\\
\small\tt steven.noble@bbk.ac.uk
\and
Llu\'{\i}s Vena\\
\small Universitat Polit\`ecnica de Catalunya\\[-0.8ex]
\small Barcelona\\[-0.8ex]
\small Spain\\
\small\tt lluis.vena@upc.edu}

\date{29th April 2022}

\begin{document}
\maketitle

\blfootnote{Acknowledgements: This work was undertaken while three of the authors (J.E-M, I.M and S.N.) were visiting the Department of Applied Mathematics (KAM) and the Computer Science Institute (I\'UUK), Charles University, Prague. They are grateful to KAM and I\'UUK for their generous hospitality and excellent research environment. J.E-M. was hosted and supported by KAM and by the Fulbright Commission as a Charles--Fulbright Distinguished Chair. A.G. was supported by Czech Science Foundation GA \v{C}R 19-21082S.  I.M. was supported by KAM and by a Scheme 4 grant from the London Mathematical Society. S.N. was  supported by KAM. L.V. is supported by Beatriu de Pin\'os BP2018, funded by the AGAUR (Government of Catalonia) and by the Horizon 2020 programme No 801370, and was supported by Czech Science Foundation GA \v{C}R 18-13685Y}

\begin{abstract}
We prove that the ribbon graph polynomial of a graph embedded in an orientable surface is irreducible if and only if the embedded graph is neither the disjoint union nor the join of embedded graphs. This result is analogous to the fact that the Tutte polynomial of a graph is irreducible if and only if the graph is connected and non-separable.

\smallskip
\noindent
\textbf{Mathematics Subject Classifications:} 05C31, 05B35

\smallskip
\noindent
\textbf{Keywords:} Bollob\'as--Riordan polynomial, delta-matroid, irreducible, ribbon graph, ribbon graph polynomial, separable, Tutte polynomial

\end{abstract}

\section{Introduction}
The \emph{Tutte polynomial} of a graph $G=(V,E)$ can be defined by
\begin{equation}\label{tsum}T(G;x,y) := \sum_{A\subseteq E} (x-1)^{r(E)-r(A)} (y-1)^{|A|-r(A)}, \end{equation}
where $r(A)$ is the rank of the subgraph $(V,A)$ of $G$.
It satisfies a universality property, which roughly means that it contains all graph parameters that satisfy a linear relation among $G$, $G\ba e$ and $G/e$  (see e.g.~\cite[Sec.~2.4]{handbook} for details). Because of this, the Tutte polynomial captures a surprisingly diverse range of graph parameters and appears in a variety of areas, such as statistical physics, knot theory, and coding theory. (See, for example, \cite{Bobook,BO,EMM,handbook,Welsh} for further background.)

 A standard property of the Tutte polynomial is that when a graph $G$ is the disjoint union or the one-point join of  graphs $G_1$ and $G_2$ we have
  $ T(G;x,y) =T(G_1;x,y)\, T(G_2;x,y)$.
  Thus if $T(G;x,y)$ is irreducible over $\mathbb{Z}[x,y]$, then $G$ must be connected and non-separable. In the 1970's Brylawski~\cite{Bry} conjectured that the converse also holds. This conjecture was verified in 2001  by Merino, de~Mier and Noy~\cite{Merino}. Thus
    $T(G;x,y)$ is irreducible over  $\mathbb{Z}[x,y]$ (or $\mathbb{C}[x,y]$) if and only if $G$ is a connected and non-separable graph.

 \smallskip
 An \emph{embedded graph}  (or equivalently a \emph{combinatorial map}, \emph{ribbon graph}, etc.) can be thought of as a graph drawn on a closed surface in such a way that its edges do not intersect (except at any common vertices), and such that its  faces are homeomorphic to discs.
 Our aim is to extend the above irreducibility result to the setting of embedded~graphs.

The analogue of the Tutte polynomial for an embedded graph $\bG$ is the  \emph{ribbon graph polynomial}, $R(\bG;x,y)$, which is a universal deletion-contraction invariant for  embedded graphs.
Its definition differs from~\eqref{tsum} by modifying the rank function so that it records some topological information about the embedding.
For an embedded graph $\bG=(V,E)$ and subset $A$ of $E$, we define $\gamma (A)$ to be the Euler genus of the embedded subgraph $(V,A)$, which coincides with the genus of a neighbourhood of $(V,A)$
in the surface, in the case that the surface is non-orientable, and twice its genus in the orientable case. Now let $\si(A):=r(A)+\tfrac{1}{2}\gamma (A)$, and then let
\begin{equation}\label{rgpoly}
R(\bG;x,y) := \sum_{A\subseteq E} (x-1)^{\si(E)-\si(A)} (y-1)^{|A|-\si(A)}.
\end{equation}
Although the ribbon graph and Tutte polynomials coincide for graphs embedded in the sphere, they do not agree in general.
 We note that the polynomial $R(\bG;x,y)$ is, up to a prefactor, a two-variable specialisation of the well known four-variable Bollob\'as--Riordan polynomial of~\cite{BR02}. However, as discussed in Remark~\ref{canon}, there are good reasons to work with $R(\bG;x,y)$ rather than Bollob\'as--Riordan polynomial or any of the more general topological Tutte polynomials in the literature.

 We say an embedded graph is a \emph{join}  if it  can be obtained from two embedded graphs via a connected summing operation that acts as follows. Choose a disc in each surface whose boundary intersects the graph in that surface at exactly a single non-isolated vertex.   Then identify the two discs so that the vertices on their boundaries are also identified, and then delete the interior of the identified~discs.

 A standard property  (see~\cite{BR02}) of the ribbon graph polynomial is that  if $\bG$ is either the disjoint union or join of $\bG_1$ and $\bG_2$, then  $R(\bG;x,y)=R(\bG_1;x,y) \, R(\bG_1;x,y)$. We prove here that the converse holds in the orientable case.
\begin{theorem}\label{main}
Let $\bG$ be a graph embedded in an orientable surface. Then $R(\bG;x,y)$ is irreducible over $\mathbb{Z}[x,y]$ (or $\mathbb{C}[x,y]$) if and only if $\bG$  is connected and not a join of two smaller embedded graphs.
\end{theorem}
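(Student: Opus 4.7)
The plan is to prove the nontrivial direction by contraposition: assume $R(\bG;x,y)=P(x,y)Q(x,y)$ with both factors non-constant in $\mathbb{Z}[x,y]$, and deduce that $\bG$ is disconnected or a join. First I would do some bidegree bookkeeping. Writing $u=x-1$ and $v=y-1$, the subset sum in \eqref{rgpoly} has a unique term of top $u$-degree (from $A=\emptyset$, giving $u^{\sigma(E)}$) and a unique term of top $v$-degree (from $A=E$, giving $v^{|E|-\sigma(E)}$), both with coefficient $1$. Hence $P$ and $Q$ are monic in their top $u$- and $v$-monomials, and the bidegrees split additively, which already constrains the shape of any possible factorization.

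Next, I would reformulate the problem via the delta-matroid $D(\bG)$ of $\bG$, which in the orientable case is an even delta-matroid whose feasible sets are the edge-sets of spanning quasi-trees. The ribbon graph polynomial of $\bG$ is, up to a routine prefactor, a Tutte-type polynomial of $D(\bG)$, and both the disjoint union and the join of embedded graphs induce a direct-sum decomposition of the associated delta-matroid (the vertex identification involved in a join does not alter the delta-matroid structure). Thus an embedded graph that is disconnected or a join has a delta-matroid that splits as a direct sum, while conversely the hypothesis that $\bG$ is connected and not a join should force $D(\bG)$ to be indecomposable.

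The main combinatorial step, modelled on the Merino--de Mier--Noy argument for the classical Tutte polynomial, is to extract from a supposed factorization $R=PQ$ a partition $E=E_1\sqcup E_2$ of the edge set that is \emph{separating}, meaning $\sigma(A)=\sigma(A\cap E_1)+\sigma(A\cap E_2)$ for every $A\subseteq E$, and to show that $P$ and $Q$ are, up to scalars, the ribbon graph polynomials of the two sides. From such a separating partition one can then read off either a disconnection or a join of $\bG$, using the orientability hypothesis to ensure that $\tfrac12\gamma$ is additive across the partition. Concretely, I would (i)~use the bidegree data to determine the monomial supports of $P$ and $Q$ in terms of subsets of $E$, (ii)~show that these supports force a bipartition of the edges, and (iii)~verify that this bipartition is separating by matching coefficients on both sides.

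The main obstacle will be step (ii). Merino--de Mier--Noy exploit the fact that all bases of a matroid have the same cardinality, together with an internal/external activity expansion, to pin down the origin of each monomial in a supposed factorization. For delta-matroids, feasible sets vary in cardinality and there is no canonical activity expansion, so this argument must be redesigned. I expect the technical heart of the proof to be a careful coefficient analysis incorporating both the rank $r$ and the genus contribution $\tfrac12\gamma$, in which the orientability hypothesis enters essentially to keep $\sigma$ integer-valued and the monomial bookkeeping clean.
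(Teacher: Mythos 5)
There is a genuine gap, and it sits exactly where you predict it will: step (ii) of your plan. You propose to extract from a hypothetical factorization $R=PQ$ a \emph{separating bipartition} $E=E_1\sqcup E_2$ of the edge set and then identify $P$ and $Q$ with the polynomials of the two sides. No version of this is known to work, and it is not what Merino, de~Mier and Noy do for matroids (their argument does not use activity expansions or equicardinality of bases to ``pin down the origin of each monomial''; it is a purely numerical coefficient argument). The fundamental difficulty is that a factor of the polynomial carries no a priori association with a subset of $E$: there is no reason the factors of $T(D)$ should be Tutte polynomials of minors or restrictions of $D$, so one cannot hope to read off a bipartition from the monomial supports. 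The paper avoids this entirely. Its proof of the hard direction assumes a nontrivial factorization $T(D)=A\cdot C$, notes $a_{0,0}c_{0,0}=b_{0,0}=0$ so WLOG $a_{0,0}=0$, uses $b_{1,0}=a_{1,0}c_{0,0}$ to conclude $c_{0,0}\neq 0$, and then uses Brylawski-type affine relations among the coefficients $b_{i,j}$ (valid for delta-matroids by Gordon's theorem, see Theorem~\ref{thm:Bry}) together with the top-coefficient identities of Lemma~\ref{lem:topcoeffsR} to force $c_{0,0}=0$, a contradiction. None of this machinery appears in your outline.

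The other essential ingredient you are missing is the reason $b_{1,0}\neq 0$ in the first place: this is the beta invariant. The paper must first prove the delta-matroid analogue of Crapo's theorem, namely that for an even delta-matroid $D$ with at least two elements, $\beta(D)\neq 0$ if and only if $D$ is connected (Theorem~\ref{thm:betaconn}). This is genuinely harder than in the matroid case because the deletion--contraction recursion for $T(D)$ carries factors of $(x-1)$ and $(y-1)$ at orientable ribbon loops, so contributions to the coefficient of $x$ can have either sign; the paper controls this with a width-tracking sign lemma (the sign of $\beta(D)$ is $(-1)^{w(D)/2}$, Proposition~\ref{prop:keysignsstuff}) and with Bouchet's chain theorem that a connected even delta-matroid has, for each element $e$, at least one of $D\backslash e$, $D/e$ connected. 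Your bidegree observations and the reduction to delta-matroids (including the fact that disjoint union and join both induce a direct sum of delta-matroids, and that $R(\bG)=T(D(\bG))$ with $D(\bG)$ even precisely when $\bG$ is orientable) are correct and match the paper's setup, but the core of the argument still needs to be supplied, and the route you sketch for it would need to be abandoned rather than repaired.
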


 \medskip

Theorem~\ref{main} is an analogue of Merino, de~Mier and Noy's  result referred to above that  $T(G;x,y)$ is irreducible  if and only if $G$ is connected and non-separable.
As with many results for the classical Tutte polynomial, this irreducibility property is properly understood in terms of matroids, and  was shown in this more general setting. Merino et al. proved that for a matroid $M$ the polynomial $T(M;x,y)$ is irreducible if and only if $M$ is connected.  (The Tutte polynomial is extended to a matroid by taking $r$ in Equation~\eqref{tsum} to be the rank function of the matroid.) The graph result follows from the matroid one by considering  cycle matroids of graphs.

The situation for the ribbon graph polynomial is similar: many properties of the ribbon graph polynomial are properly understood in terms of delta-matroids. Delta-matroids generalise matroids, in essence, by relaxing the requirement that bases all have the same size, the analogue of bases being called \emph{feasible sets}.
It is well known that many properties of graphs are actually properties of matroids. Similarly, many properties of embedded graphs are in fact properties of delta-matroids.
 In particular,  the ribbon graph polynomial, connectivity and joins  can be understood in terms of delta-matroids (details are provided below), and  Theorem~\ref{main} is properly a result about delta-matroids:
  \begin{theorem}\label{main2}
Let $D$ be an even delta-matroid. Then $T(D;x,y)$ is irreducible over $\mathbb{Z}[x,y]$ (or $\mathbb{C}[x,y]$) if and only if $D$  is connected.
\end{theorem}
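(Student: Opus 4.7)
The plan is to adapt the Merino--de~Mier--Noy argument for matroids to the delta-matroid setting, proceeding by induction on the size of the ground set.

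For the easy direction, if $D = D_1 \oplus D_2$ is a direct sum of delta-matroids, the analogue of $\si$ used to define $T(D;x,y)$ is additive over the summands, so a direct expansion yields $T(D;x,y) = T(D_1;x,y)\,T(D_2;x,y)$. This mirrors the multiplicativity of $R(\bG;x,y)$ under join and disjoint union recalled in the introduction. Consequently, if $T(D;x,y)$ is irreducible then $D$ must be connected.

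For the hard direction, assume $D$ is connected and even and suppose for contradiction that $T(D;x,y) = P(x,y)\,Q(x,y)$ with $P, Q \in \mathbb{Z}[x,y]$ nonconstant. Induct on $|E|$. The base case $|E| \leq 1$ reduces to checking the three one-element even delta-matroids by hand, each of which yields an irreducible linear polynomial. For $|E| \geq 2$ the main ingredients are (i)~a deletion/contraction recursion for $T$ at an element $e$, and (ii)~a connectivity lemma that any connected even delta-matroid with $|E| \geq 2$ contains an element $e$ for which $D \setminus e$ or $D / e$ is again connected and even. Given such $e$, the inductive hypothesis renders $T(D\setminus e;x,y)$ or $T(D/e;x,y)$ irreducible; comparing specific coefficients of $PQ$ (for example, leading monomials in a graded lex order, or extremal coefficients in the expansion in powers of $(x-1)$ and $(y-1)$) against those forced by the deletion/contraction decomposition should force $P$ or $Q$ to be constant, yielding the desired contradiction.

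The main obstacle is ingredient (ii). The matroid version of this lemma is a classical consequence of Tutte's connectivity theory, but a clean delta-matroid analog is not immediate, since connectedness of a delta-matroid is phrased via separator decompositions rather than circuit structure. A natural route is to exploit the twist operation on delta-matroids, which interchanges deletion and contraction up to a canonical transformation, together with the parity characterisation of evenness to ensure the class is preserved along the induction. A secondary difficulty is that one-element even delta-matroids come in three isomorphism types rather than two, so the coefficient-comparison step at the end of the induction may be more delicate than in the matroid case treated by Merino, de~Mier and Noy; in particular, the extremal coefficients of $T(D;x,y)$ that one hopes to exploit will need a combinatorial interpretation (in terms of feasible sets of maximum and minimum size) that is sensitive enough to detect connectedness of $D$.
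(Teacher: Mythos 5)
Your easy direction is fine and matches the paper (multiplicativity of $T$ over direct sums). For the hard direction, however, the induction you propose does not work as described, and it is not how Merino, de~Mier and Noy --- or this paper --- argue. Knowing that $T(D\setminus e;x,y)$ or $T(D/e;x,y)$ is irreducible gives essentially no control over factorizations of $T(D;x,y)$: the deletion--contraction relation expresses $T(D)$ as a \emph{sum} of (scalings of) these two polynomials, a factorization $P(x,y)\,Q(x,y)$ of a sum does not decompose along its summands, and a sum of irreducibles need not be irreducible. The step ``comparing specific coefficients \dots should force $P$ or $Q$ to be constant'' is precisely where a mechanism is required, and none is supplied. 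In the actual argument, the induction and the chain theorem you flag as ingredient~(ii) (Bouchet's theorem that for a connected even delta-matroid and \emph{any} element $e$, at least one of $D\setminus e$ and $D/e$ is connected --- a known result on tight multimatroids, cited rather than proved) are used for a different purpose: to show that the beta invariant $\beta(D)$, the coefficient of $x$ in $T(D;x,y)$, is nonzero whenever $D$ is connected with at least two elements. That step needs a careful sign analysis (both terms of the relevant deletion--contraction recurrence contribute to the coefficient of $x$ with the same sign $(-1)^{w(D)/2}$, so no cancellation occurs), which your sketch does not anticipate.

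The irreducibility proof itself is then a direct coefficient argument, not an induction on $|E|$: writing $T(D)=A\cdot C$ with $a_{0,0}=0$, the nonvanishing of $b_{1,0}=\beta(D)$ and of $b_{0,1}$ forces $c_{0,0}\neq 0$ and $a_{1,0},a_{0,1}\neq 0$; a contradiction ($c_{0,0}=0$) is then extracted from Brylawski's affine relations among the coefficients $b_{i,j}$ --- valid for $T(D)$ by Gordon's theorem, since $\sigma(\emptyset)=0$ and $\sigma(A)\leq\max\{|A|,\sigma(E)\}$ --- together with the fact that the extreme row and column of coefficients each sum to $1$. Your proposal omits both of these ingredients, the beta-invariant characterization of connectivity and the Brylawski relations, and they are the substance of the proof. (A minor point: there are two, not three, one-element even delta-matroids, namely a loop and a coloop.)
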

The orientably embedded graph of Theorem~\ref{main} is replaced in Theorem~\ref{main2} by an even delta-matroid, defined as one whose feasible sets all have size of the same parity; and  the ribbon graph polynomial $R(\bG;x,y)$ is replaced by $T(D;x,y)$, the Tutte polynomial of the delta-matroid $D$. The latter is a universal deletion-contraction invariant for delta-matroids (just as the classical Tutte polynomial is for matroids) and can be defined using a sum similar to that in Equation~\eqref{tsum}, replacing the rank function $r$ with the average of the rank functions of `minimum and maximum matroids' that arise from  a delta-matroid. See Section~\ref{sec3} for details.
  Similarly to the graphs and matroids case, Theorem~\ref{main} follows from Theorem~\ref{main2} by considering the delta-matroid of an embedded graph.

Here we are considering an analogue of the Tutte polynomial for embedded graphs.
 There are many extensions of the Tutte polynomial from graphs to other types of combinatorial object. Our main motivation in undertaking this work lies in uncovering what properties are innate to  graphs or matroids, and what properties extend or should extend to a wider class of objects. The significance of embedded graphs and delta-matroids in this context is that they provide an effective step in  moving away from the classical setting of graphs and matroids -- they are different but not too different. What is especially interesting about  Theorem~\ref{main} and~\ref{main2} is that very little of the argument depends upon the specific class of objects (graphs, matroids, embedded graphs, or delta-matroids) that we are working with. This hints at a larger, yet to be understood structure that would help explain the irreducibility of
  graph polynomials such as the Tutte polynomial.

\begin{remark}\label{canon}
Our interest here is in extensions of the Tutte polynomial to graphs that are cellularly embedded in  surfaces (the cellular condition means that the faces are homeomorphic to discs). It is not  obvious how the Tutte polynomial should be extended from graphs to embedded graphs, and many candidates  have been proposed~\cite{BR01,BR02,GLRV,GKRV,HM,KMT,Kr,LV,MS,MT,negami}.
It is natural to ask why we chose the ribbon graph polynomial~$R(\bG;x,y)$  as the analogue of the Tutte polynomial, rather than any of these other graph polynomials.

The Tutte polynomial of a graph satisfies a deletion-contraction recurrence that allows its expression in terms of its evaluations on trivial graphs. While all of the polynomials mentioned above have deletion-contraction relations that apply to particular types of edges of a cellularly embedded graph, only the ribbon graph polynomial has a  ``full'' deletion-contraction definition that applies to all edge-types.

 In more detail, there is a way to associate a ``canonical Tutte polynomial'' with a class of combinatorial objects~\cite{Fink,KMT}. The resulting polynomials are universal deletion-contraction invariants for that class, just as the classical Tutte polynomial is for the class of graphs.
 In this framework, the ribbon graph polynomial $R(\bG;x,y)$ arises as the  polynomial  associated with graphs that are cellularly embedded in surfaces, and hence is the universal deletion-contraction invariant for this class. (A similar comment holds for the delta-matroid version of the ribbon graph polynomial.) All of the other topological graph polynomials mentioned above arise in this framework as deletion-contraction invariants associated with other types of embedded graphs (for example, the Bollob\'as--Riordan polynomial arises as the  universal deletion-contraction invariant for graphs that are non-cellularly embedded in surfaces). See \cite{HM,KMT,  MS, MT} for details.
\end{remark}

\section{Background and notation}\label{bandn}
\subsection{Ribbon graphs}\label{embg}

It is convenient to realise embedded graphs as ribbon graphs. We give a brief overview of ribbon graphs, referring the reader to~\cite{EMMbook} or~\cite{GT87} (where they are called reduced band decompositions) for additional details, including their equivalence with (cellularly) embedded graphs.
 A ribbon graph is a structure that arises by taking a regular neighbourhood of a graph embedded in a surface while keeping the vertex--edge structure of the graph. Informally it can be thought of as ``a graph with vertices as discs and  edges as ribbons''. Formally, a \emph{ribbon graph} $\bG =\left(  V,E  \right)$ is a  surface with boundary represented as the union of two  sets of  discs, a set $V $ of {\em vertices}, and a set $E$ of {\em edges}  such that: (1)~the vertices and edges intersect in disjoint line segments;
(2)~each such line segment lies on the boundary of precisely one
vertex and precisely one edge;
(3)~every edge contains exactly two such line segments.

Graph-theoretic terminology naturally extends to ribbon graphs. A \emph{ribbon subgraph} $\bH$ of $\bG$ is a ribbon graph obtained from $\bG$ by removing some of its vertices and edges. It is \emph{spanning} if it has the same vertices as $\bG$.
The  \emph{rank} $r(\bG)$ of a ribbon graph $\bG=(V,E)$  is its number of vertices minus its number of connected components, that is,  it is the rank of its underlying graph. For $A\subseteq E$, $r(A)$ is the rank of the ribbon subgraph $(V,A)$ of $\bG$.

Topologically, a ribbon graph is a surface with boundary.
A \emph{quasi-tree} is a ribbon graph that has exactly one boundary component. A ribbon subgraph $\bH$ is a \emph{spanning quasi-tree} of a connected ribbon graph $\bG$ if it is a  quasi-tree that contains all the vertices of $\bG$. If $\bG$ is not connected, then we say $\bH$  is a spanning quasi-tree if for each connected component of $\bG$ the ribbon subgraph of $\bH$ obtained by removing vertices and edges not in this component is a spanning quasi-tree.

 A ribbon graph is \emph{orientable} if it is orientable when considered as a surface with boundary.   The \emph{genus} of a ribbon graph is its genus as a surface with boundary. The \emph{Euler genus}~$\gamma(\bG)$ of a ribbon graph~$\bG=(V,E)$  is its genus if it is non-orientable, and twice its genus if it is orientable. For $A\subseteq E$, $\gamma(A)$ is the Euler genus of the ribbon subgraph $(V,A)$ of $\bG$. A ribbon graph is \emph{plane} if it has Euler genus zero (note that we allow plane ribbon graphs to be disconnected).
The \emph{ribbon graph polynomial}  $R(\bG;x,y) $ of $\bG$ is defined as in Equation~\eqref{rgpoly}, where again $\si(A):=r(A)+\tfrac{1}{2}\gamma (A)$.

\subsection{Delta-matroids}\label{dm}
We shall work in the setting of delta-matroids and from this recover our results for embedded graphs.
We assume familiarity with the basic definitions of matroid theory~\cite{Oxley}, and give an overview of the  delta-matroid theory we use here. We refer the reader to~\cite{CMNR-JCT,Msurvey} for additional background on delta-matroids, which were introduced by Bouchet in~\cite{Bouchet:symm}. Equivalent concepts albeit using different terminology were also introduced at around the same time in~\cite{chand} and~\cite{dress}.

A \emph{delta-matroid} $D$ comprises a pair $(E,\mathcal F)$ where $E$ is a finite set and $\mathcal F$ is a non-empty collection of subsets of $E$ with the property that for all triples $(F_1,F_2,e)$ comprising members $F_1$ and $F_2$ of $\mathcal F$ and an element $e$ of $F_1 \bigtriangleup F_2$, there is an element $f$ of $F_1 \bigtriangleup F_2$ (which may be equal to $e$) such that $F_1 \bigtriangleup \{e,f\} \in \mathcal F$. This property is known as the \emph{symmetric exchange axiom}. The members of $\mathcal F$ are called \emph{feasible sets}, and $E$ is called its \emph{ground set}. It is not difficult to see that matroids are precisely delta-matroids in which the feasible sets are equicardinal.

Given a delta-matroid $D$, let $\mathcal F(D)$ denote its collection of feasible sets, and let $\mathcal F_{\max}$
and $\mathcal F_{\min}$ denote the subsets of $\mathcal F(D)$ comprising the feasible sets with maximum and minimum size respectively. It is straightforward to show that both $(E,\mathcal F_{\max})$ and $(E,\mathcal F_{\min})$ are matroids, known as the \emph{maximum} and \emph{minimum} matroids and denoted by $D_{\max}$ and $D_{\min}$ respectively.

For a matroid $M$, let $r(M)$ denote its rank and let $r_M(A)$ denote the rank of the set $A$ of elements of $M$.
For a delta-matroid $D$ with element set $E$ and set $\mathcal F$ of feasible sets,  the \emph{delta-matroid rank function}, $\rho_D$, introduced by Bouchet in~\cite{Bouchet:rep} is given by
\[ \rho_D(A)=|E| - \min\{|A \bigtriangleup F|: F\in \mathcal F\}.\]
Note that if a delta-matroid $D$ is also a matroid, then $\rho_D$ and $r_D$ do not generally coincide. This explains why we do not define the Tutte polynomial of a delta-matroid by merely replacing $r$ by $\rho$ in Equation~\eqref{tsum}.

A \emph{coloop} of $D$ is an element of $D$ belonging to every feasible set. A \emph{loop} of $D$ is an element of $D$ belonging to no feasible set.

Let $D$ be a delta-matroid and $e$ an element of $D$. Suppose first that $e$ is not a coloop of $D$. Then we define $D\ba e$, the \emph{deletion} of $e$, to be the pair
\[ (E-e,\{ F \in \mathcal {F} \mid e\notin F\}).\]
Now suppose that $e$ is not a loop of $D$. Then we define
$D/ e$, the \emph{contraction} of $e$, to be the pair
\[ (E-e,\{ F-e\mid F \in \mathcal {F} \text{ and } e \in F\}).\]
If $e$ is either a coloop or a loop of $D$, then one of $D\ba e$ and $D/e$ is defined. In this case, we define whichever of $D\ba e$ and $D/e$ is so far undefined by setting $D\ba e = D/e$. It is easy to check that both $D\ba e$ and $D/e$ are delta-matroids. Moreover it is also easy to check that if we perform a sequence of deletions and contractions then the resulting delta-matroid does not depend on the order in which these operations are carried out. Thus we may delete and contract sets of elements without ambiguity. Any delta-matroid obtained from $D$ by deleting and contracting possibly empty subsets of the elements of $D$ is said to be a \emph{minor} of $D$.

For a subset $A$ of the element set of $D$, let $D|A=D\ba A^c$ denote the delta-matroid formed by deleting the elements of $A^c:=E\setminus A$ and let $\sigma(A)=(r((D|A)_{\max}) + r((D|A)_{\min}))/2$.
The \emph{width} $w(D)$ of $D$ is $r(D_{\max}) - r(D_{\min})$.
Note that $\sigma(A)= r((D|A)_{\min}) +w(D|A)/2$.

Just as the spanning trees in a graph give rise to its cycle matroid, the spanning quasi-trees in a ribbon graph give rise to its delta-matroid.
For a ribbon graph $\bG=(V,E)$, the pair $D(\bG):=(E,\F)$, where
\[\F:=\{F\subseteq E : F\text{ is the edge set of a  spanning quasi-tree of }\bG\},\]
is the \emph{delta-matroid of $\bG$}.
These delta-matroids can be regarded as the topological analogues of the cycle matroids of graphs.
A delta-matroid arising from a ribbon graph in this way is said to be \emph{ribbon-graphic}.
The class of ribbon-graphic delta-matroids was first considered by Bouchet in~\cite{Bouchet:maps}, albeit using very different language.
In Proposition~5.3 of~\cite{CMNR-JCT}, it is shown that $w(D(\bG))=\gamma(\bG)$, and consequently $\sigma(D(\bG))=\si(\bG)$.

\section{The Tutte polynomial of a delta-matroid}\label{sec3}

We begin by extending the definition of the Tutte polynomial of a matroid to delta-matroids.
For a delta-matroid $D$ with element set $E$, define its \emph{Tutte polynomial}  $T(D;x,y)$ by
\begin{equation}\label{edef} T(D;x,y) := \sum_{A\subseteq E} (x-1)^{\sigma(E)-\sigma(A)}(y-1)^{|A|-\sigma(A)}.\end{equation}
Note that if $D$ is a matroid, then for every subset $A$ of its elements, $r((D|A)_{\min})=r((D|A)_{\max})$, so $\sigma(A)=r(A)$. Therefore our definition of the Tutte polynomial of a delta-matroid is consistent with the existing definition of the Tutte polynomial of a matroid and retains several key properties.

Following~\cite{CMNR-JCT}, the \emph{Bollob\'as--Riordan polynomial} of a delta-matroid $D$ is given by
\[ \BR(D;x,y,z):= \sum_{A\subseteq E}
(x-1)^{r_{D_{\min}}(E)-r_{D_{\min}}(A)}
y^{|A|-r_{D_{\min}}(A)}
z^{w(D|A)}.\]

Since $\sigma(D(\bG))=\si(\bG)$ for any ribbon graph $\bG$, the ribbon graph polynomial of $\bG$ agrees with the Tutte polynomial of its delta-matroid:   $R(\bG;x,y)=T(D(\bG);x,y)$.
Similarly, the Bollob\'as--Riordan polynomial of a ribbon graph, introduced  in~\cite{BR02}, agrees with the  Bollob\'as--Riordan polynomial of its delta-matroid (see Theorem~6.4 of~\cite{CMNR-JCT}).

\medskip
The next two results are from~\cite{CMNR-JCT}. The first is stated on page~52 and the second is Theorem~6.6(1).
\begin{lemma}\label{lem:RGandBR}
For every delta-matroid $D$,
\[T(D;x,y)= (x-1)^{w(D)/2}\BR(D;x,y-1,1/\sqrt{(x-1)(y-1)}\:).\]
\end{lemma}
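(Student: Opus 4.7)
The plan is to verify the identity by direct substitution, exploiting the fact that the summand of $T(D;x,y)$ and that of $\BR(D;x,y,z)$ are indexed by the same set of subsets $A \subseteq E$, so it suffices to match exponents term-by-term. The key bridge between the two polynomials is the identity $\sigma(A) = r_{D_{\min}}(A) + w(D|A)/2$ stated earlier in Section~\ref{dm}, together with its global instance $\sigma(E) = r(D_{\min}) + w(D)/2$ (noting $w(D|E) = w(D)$ and $r_{D_{\min}}(E) = r(D_{\min})$).

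First, I would substitute $y \mapsto y-1$ and $z \mapsto 1/\sqrt{(x-1)(y-1)}$ into the definition of $\BR(D;x,y,z)$ and multiply by $(x-1)^{w(D)/2}$ to obtain
\[
(x-1)^{w(D)/2}\,\BR\!\left(D;x,y-1,\tfrac{1}{\sqrt{(x-1)(y-1)}}\right) = \sum_{A\subseteq E} (x-1)^{\alpha(A)}(y-1)^{\beta(A)},
\]
where
\[
\alpha(A) = \tfrac{w(D)}{2} + r_{D_{\min}}(E) - r_{D_{\min}}(A) - \tfrac{w(D|A)}{2}, \qquad \beta(A) = |A| - r_{D_{\min}}(A) - \tfrac{w(D|A)}{2}.
\]

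Next, using $\sigma(A) = r_{D_{\min}}(A) + w(D|A)/2$, I would observe immediately that $\beta(A) = |A| - \sigma(A)$. For the exponent of $(x-1)$, I would group the first two terms of $\alpha(A)$ as $\sigma(E) = r_{D_{\min}}(E) + w(D)/2$, so that $\alpha(A) = \sigma(E) - r_{D_{\min}}(A) - w(D|A)/2 = \sigma(E) - \sigma(A)$. The resulting sum is exactly $T(D;x,y)$ as defined in Equation~\eqref{edef}.

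There is essentially no obstacle here: the only subtlety is the bookkeeping between the half-integer exponent $w(D)/2$ outside the sum and the $w(D|A)/2$ absorbed into the square root in the third slot of $\BR$. One should check that the square-root factors combine cleanly (they do, since $w(D|A)$ is an integer, and the $(x-1)$ and $(y-1)$ halves simply redistribute into the exponents $\alpha(A)$ and $\beta(A)$ above). No delta-matroid machinery beyond the definitions recalled in Section~\ref{dm} is needed.
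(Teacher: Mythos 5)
Your overall strategy is reasonable, and it is worth noting that the paper itself gives no proof of this lemma at all --- it is quoted verbatim from \cite{CMNR-JCT} (``stated on page~52''), so a direct term-by-term verification is a legitimate thing to attempt: both polynomials are sums over the same subsets $A\subseteq E$, and the substitution redistributes exponents exactly as you describe. However, there is one genuine gap, and it sits precisely at the load-bearing step. You assert that the identity $\sigma(A)=r_{D_{\min}}(A)+w(D|A)/2$ is ``stated earlier in Section~\ref{dm}''. It is not. What the paper states is $\sigma(A)=r((D|A)_{\min})+w(D|A)/2$, and $r((D|A)_{\min})$ --- the rank of the minimum matroid of the restricted delta-matroid $D|A=D\ba A^c$, i.e.\ the minimum size of a feasible set of $D\ba A^c$ --- is a priori a different quantity from $r_{D_{\min}}(A)$, the rank of the set $A$ in the fixed matroid $D_{\min}$, which is what actually appears in the definition of $\BR$. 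One is a minimum over feasible sets of a minor; the other is a maximum over independent subsets of $A$ in $D_{\min}$. Their equality is true, but it is a lemma requiring the symmetric exchange axiom, not a restatement of a definition, and it is the only non-trivial content of the identity you are proving; everything else really is bookkeeping.

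To close the gap, show that $(D\ba e)_{\min}=D_{\min}\ba e$ for every element $e$ and then induct on $|A^c|$. If some minimum-size feasible set of $D$ avoids $e$, both matroids have as bases exactly the minimum-size feasible sets of $D$ avoiding $e$. Otherwise $e$ is a coloop of $D_{\min}$, and the symmetric exchange axiom forces $e$ to be a coloop of $D$ (this implication is noted in the proof of Lemma~\ref{lem:widths}): then $D\ba e=D/e$ has feasible sets $\{F-e\}$, whose minimum-size members are the bases of $D_{\min}/e=D_{\min}\ba e$. Iterating gives $(D\ba A^c)_{\min}=D_{\min}\ba A^c$, hence $r((D|A)_{\min})=r_{D_{\min}}(A)$, after which your computation of $\alpha(A)$ and $\beta(A)$ goes through verbatim (and your closing remark about the square roots is fine: $z^{w(D|A)}$ with $z=((x-1)(y-1))^{-1/2}$ splits formally into the two halves regardless of the parity of $w(D|A)$).
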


\begin{proposition}\label{prop:previousresult}
For every delta-matroid $D$ with element set $E$,
\[ v^{\sigma(D)}u^{-w(D)/2}T(D;u/v+1,uv+1)=\sum_{A\subseteq E}
v^{|A|}u^{|E|-\rho_D(A)}.\]
\end{proposition}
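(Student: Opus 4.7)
I would prove this proposition by direct substitution into the definition \eqref{edef} of $T(D;x,y)$, followed by isolating and verifying a single per-subset identity relating $\rho_D$ to $\sigma$. The substitution is routine; the identity is the genuine delta-matroidal content.

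First I substitute $x=u/v+1$ and $y=uv+1$ into \eqref{edef}. Each summand becomes
\[
(u/v)^{\sigma(E)-\sigma(A)}(uv)^{|A|-\sigma(A)}=u^{\sigma(E)+|A|-2\sigma(A)}v^{|A|-\sigma(E)}.
\]
Since $\sigma(D)=\sigma(E)$, multiplying by $v^{\sigma(D)}$ clears the negative $v$-exponent. The defining relations $\sigma(D)=(r(D_{\min})+r(D_{\max}))/2$ and $w(D)=r(D_{\max})-r(D_{\min})$ give $\sigma(D)-w(D)/2=r(D_{\min})$, so
\[
v^{\sigma(D)}u^{-w(D)/2}T(D;u/v+1,uv+1)=\sum_{A\subseteq E}u^{r(D_{\min})+|A|-2\sigma(A)}v^{|A|}.
\]
Matching this against the desired right-hand side reduces the proposition to the per-subset identity
\[
|E|-\rho_D(A)=r(D_{\min})+|A|-2\sigma(A) \quad \text{for every } A\subseteq E. \qquad (\ast)
\]

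The main obstacle is establishing $(\ast)$. Expanding $2\sigma(A)=r((D|A)_{\min})+r((D|A)_{\max})$ and $\rho_D(A)=|E|-\min_{F\in\mathcal F}|A\bigtriangleup F|$, this is equivalent to
\[
\min_{F\in\mathcal F}|A\bigtriangleup F|=|A|+r(D_{\min})-r((D|A)_{\min})-r((D|A)_{\max}).
\]
I would prove this by passing to the twist $D*A:=(E,\{F\bigtriangleup A:F\in\mathcal F\})$, under which the left-hand side becomes exactly $r((D*A)_{\min})$. This reduces the task to computing the minimum feasible-set size of a particular delta-matroid, a problem amenable to the symmetric exchange axiom. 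The right-hand side admits a natural reading as the deficit of $A$ relative to a maximum feasible subset of $A$, plus the minimum excursion into $A^c$ forced by the minimum matroid of $D$; a symmetric-exchange argument, performed back and forth between extremal feasible sets of $D$, $D|A$ and $D*A$, allows both extremals to be realised simultaneously. For matroids the claim collapses to the classical fact that a basis closest to $A$ in symmetric difference meets $A$ in exactly $r(A)$ elements, and I expect the technical work to lie in extending this simultaneous-extremisation argument to accommodate the variable feasible-set sizes inherent to delta-matroids. Once $(\ast)$ is in hand, the proposition follows immediately from the reduction above.
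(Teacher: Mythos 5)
The paper does not prove this proposition at all: it is imported verbatim from \cite{CMNR-JCT} (Theorem~6.6(1)), so there is no internal proof to measure yours against. Judged on its own terms, your argument is incomplete. The substitution and bookkeeping are correct: each summand of \eqref{edef} does become $u^{\sigma(E)+|A|-2\sigma(A)}v^{|A|-\sigma(E)}$, the identity $\sigma(D)-w(D)/2=r(D_{\min})$ is right, and so is the observation that $|E|-\rho_D(A)=\min_{F\in\mathcal F}|A\bigtriangleup F|=r((D*A)_{\min})$. But this reduces the proposition to the per-subset identity $(\ast)$, which is the \emph{entire} mathematical content of the statement, and you do not prove $(\ast)$: you describe a strategy (``a symmetric-exchange argument, performed back and forth\dots allows both extremals to be realised simultaneously'') and explicitly defer the technical work. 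As written this is a plan for the key step, not a proof of it.

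Two concrete obstacles stand between your sketch and a proof. First, $D|A$ means $D\ba A^c$ computed by iterated delta-matroid deletion with the coloop convention ($D\ba e=D/e$ when $e$ is a coloop), so $\mathcal F(D|A)$ is neither $\{F\in\mathcal F: F\subseteq A\}$ nor $\{F\cap A: F\in\mathcal F\}$; one must first establish (together with order-independence of minors) that it equals $\{F\cap A: F\in\mathcal F,\ |F\setminus A|\text{ minimum}\}$ before $r((D|A)_{\min})$ and $r((D|A)_{\max})$ can even be expressed in terms of $\mathcal F(D)$. Second, granting that description, $(\ast)$ asserts that the minimum of $|F\setminus A|-|F\cap A|$ over all feasible sets decomposes as $-r((D|A)_{\min})-r((D|A)_{\max})+r(D_{\min})$, i.e.\ that a feasible set can simultaneously minimise its excursion into $A^c$ and realise the relevant extremal intersections with $A$; this is exactly the ``simultaneous extremisation'' you name, it is where the symmetric exchange axiom must actually be deployed, and it does not follow from the matroid case you cite because feasible sets of a delta-matroid vary in size. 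Until $(\ast)$ is established, the proposition is not proved.
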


Recall that a delta-matroid is \emph{even} if and only if the cardinalities of its feasible sets all have the same parity. The property of being even is preserved under deletion and~contraction.

\begin{corollary}~\label{cor:basicparams}
For every delta-matroid $D$, the polynomial $T(D;x,y)$ determines the following:
\begin{enumerate}
\item the number of elements of $D$;
\item the number of feasible sets in $D$ of given size;
\item the ranks of the minimum and maximum matroids of $D$;
\item the width
of $D$;
\item whether or not $D$ is even;
\item whether or not $D$ is a matroid; and
\item in the case where $D$ is the delta-matroid of a ribbon graph $\bG$,  whether or not $\bG$ is plane.
\end{enumerate}
\end{corollary}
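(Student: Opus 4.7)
The plan is to invert Proposition~\ref{prop:previousresult} and recover the polynomial
\[P(u,v) := \sum_{A\subseteq E} v^{|A|} u^{|E|-\rho_D(A)},\]
which encodes most of the combinatorial data listed in the corollary directly in its coefficients. Since Proposition~\ref{prop:previousresult} delivers $P(u,v)$ only up to the prefactor $v^{\sigma(D)}u^{-w(D)/2}$, the task splits into two steps: first extract $\sigma(D)$ and $w(D)$ from $T(D;x,y)$ alone, and then read off the remaining items from the coefficients of $P(u,v)$.

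For the first step, substituting $x=u/v+1$ and $y=uv+1$ into the defining sum~\eqref{edef} yields the Laurent polynomial
\[L(u,v) := T(D;u/v+1,uv+1) = \sum_{A\subseteq E} u^{\sigma(D)+|A|-2\sigma(A)}\,v^{|A|-\sigma(D)},\]
which is determined by $T$. The exponent of $v$ depends only on $|A|$, attaining its minimum $-\sigma(D)$ uniquely at $A=\emptyset$ and its maximum $|E|-\sigma(D)$ uniquely at $A=E$, so both $\sigma(D)$ and $|E|$ are immediately read off from the $v$-degree range of $L$; this settles item~1 and half of item~3. By Proposition~\ref{prop:previousresult}, the Laurent polynomial $v^{\sigma(D)}u^{-w(D)/2}L(u,v)$ equals $P(u,v)$, which has only nonnegative $u$-exponents and a nonzero $u^0$ term (coming from feasible sets, which satisfy $\rho_D(A)=|E|$). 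Consequently $w(D)/2$ is precisely the minimum $u$-exponent appearing in $L(u,v)$, so $w(D)$ is determined by $T$; combined with $\sigma(D)$, this gives $r(D_{\min})$, $r(D_{\max})$, and item~4.

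With $\sigma(D)$ and $w(D)$ in hand, $P(u,v) = v^{\sigma(D)}u^{-w(D)/2}L(u,v)$ is determined by $T$. Its coefficient of $u^0 v^k$ counts subsets $A$ with $|A|=k$ and $\rho_D(A)=|E|$; but $\rho_D(A)=|E|$ is equivalent to $A$ being a feasible set, which gives item~2. Items~5 and~6 then follow immediately: $D$ is even iff the sizes of its feasible sets share a common parity (visible in item~2), and $D$ is a matroid iff all feasible sets have the same size, equivalently $w(D)=0$. For item~7, combining item~4 with the identity $w(D(\bG))=\gamma(\bG)$ from Proposition~5.3 of~\cite{CMNR-JCT} (cited in the excerpt) shows that $\bG$ is plane iff $w(D)=0$.

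The main obstacle is step~1, specifically the identification of $w(D)/2$ with the minimum $u$-exponent in $L(u,v)$. This rests on the existence of a feasible set with $\rho_D(A)=|E|$ (routine from the definition of $\rho_D$) and on a careful index match between Proposition~\ref{prop:previousresult} and the direct substitution into~\eqref{edef}; once both are in place, the remaining items are straightforward coefficient extractions from $P(u,v)$.
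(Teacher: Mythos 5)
Your proposal is correct and follows essentially the same route as the paper: substitute $x=u/v+1$, $y=uv+1$, read $|E|$ and $\sigma(D)$ from the extreme $v$-degrees via Proposition~\ref{prop:previousresult}, identify the minimum-$u$-degree terms (exponent $w(D)/2$) with the feasible sets to get their size distribution, and deduce the remaining items from $\sigma(D)$, $w(D)$ and the identity $w(D(\bG))=\gamma(\bG)$. The only cosmetic difference is that you first normalise to recover the generating function $\sum_A v^{|A|}u^{|E|-\rho_D(A)}$ explicitly, whereas the paper reads the feasible sets off the unnormalised substitution directly.
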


\begin{proof}
It follows from Proposition~\ref{prop:previousresult} that
the minimum degree of $v$ in $T(D;u/v+1,uv+1)$ is $-\sigma(D)$ and
the maximum degree of $v$ in $T(D;u/v+1,uv+1)$ is $|E(D)|-\sigma(D)$.
Thus both $|E(D)|$ and $\sigma(D)$ are determined by~$T(D)$.

As $A$ is feasible in $D$ if and only if $\rho_D(A)=|E|$, the terms of
$T(D;u/v+1,uv+1)$ with minimum degree in $u$ correspond to the feasible sets of $D$. Such a set $F$ yields a term $u^{w(D)/2}v^{|F|-\sigma(D)}$, so one may deduce the number of feasible sets of $D$ of any given size. In particular, $T(D)$ determines the ranks of the minimum and maximum matroids of the delta-matroid $D$ and consequently $w(D)$, and whether or not $D$ is even. As $T(D)$ determines the width of $D$, it also determines whether or not $D$ is a matroid. If $D$ is the ribbon-graphic delta-matroid of a ribbon graph $\bG$, then, since $w(D(\bG))=\gamma(\bG)$, $D$ is a matroid if and only if $\bG$ is plane.
\end{proof}

Given delta-matroids $D_1=(E_1,\mathcal F_1)$ and $D_2=(E_2,\mathcal F_2)$ with disjoint element sets, let $D_1 \oplus D_2$ denote the delta-matroid with element set $E_1 \cup E_2$ and set of feasible sets $\{F_1\cup F_2: F_1\in\mathcal F_1 \text{ and } F_2\in \mathcal F_2\}$. We say that a delta-matroid $D$ is \emph{connected} if there do not exist delta-matroids $D_1$ and $D_2$ with non-empty element sets satisfying $D=D_1 \oplus D_2$.

\begin{proposition}\label{prop:directsum}
Let $D$ be a delta-matroid $D$ such that $D=D_1\oplus D_2$. Then
\[T(D;x,y)=T(D_1;x,y)\,T(D_2;x,y).\]
\end{proposition}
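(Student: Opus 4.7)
The strategy is a direct computation from the definition in Equation~\eqref{edef}, showing that the defining sum factorizes along the decomposition $E = E_1 \sqcup E_2$. The key point to establish before the computation is additivity of $\sigma$ with respect to direct sums, since $|A|$ is obviously additive.

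First I would observe that every subset $A \subseteq E$ decomposes uniquely as $A = A_1 \sqcup A_2$ with $A_i \subseteq E_i$, and that $(D_1 \oplus D_2)|A = (D_1|A_1) \oplus (D_2|A_2)$. This is immediate from the definitions of deletion and direct sum: deleting an element of $E_2$ from $D_1 \oplus D_2$ leaves the feasible sets of the form $F_1 \cup F_2'$ with $F_1 \in \mathcal{F}_1$ and $F_2'$ feasible in $D_2\setminus e$, and similarly for elements of $E_1$.

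Next, I would verify that $(D_1\oplus D_2)_{\max} = (D_1)_{\max} \oplus (D_2)_{\max}$, and likewise for the minimum matroid. Since a feasible set of $D_1 \oplus D_2$ has the form $F_1 \cup F_2$ with $F_i \in \mathcal{F}_i$, the maximum-size feasible sets are precisely those with $|F_1|$ and $|F_2|$ both maximum. Combined with the standard fact that rank is additive over matroid direct sums, this yields
\[ r\bigl(((D_1\oplus D_2)|A)_{\max}\bigr) = r((D_1|A_1)_{\max}) + r((D_2|A_2)_{\max}), \]
and likewise for the minimum matroids. Averaging gives $\sigma_D(A) = \sigma_{D_1}(A_1) + \sigma_{D_2}(A_2)$, and specialising to $A = E$ gives $\sigma_D(E) = \sigma_{D_1}(E_1) + \sigma_{D_2}(E_2)$.

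Finally, I substitute into \eqref{edef}:
\[ T(D;x,y) = \sum_{A_1 \subseteq E_1}\sum_{A_2 \subseteq E_2}(x-1)^{\sigma(E_1)-\sigma(A_1)+\sigma(E_2)-\sigma(A_2)}(y-1)^{|A_1|+|A_2|-\sigma(A_1)-\sigma(A_2)}, \]
and the sum separates into the product $T(D_1;x,y)\,T(D_2;x,y)$. There is no real obstacle; the only thing to be careful about is the min/max matroid identification, but this falls out of the shape of the feasible sets of a direct sum.
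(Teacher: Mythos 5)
Your proof is correct and follows essentially the same route as the paper: both arguments rest on the identity $D|A=(D_1|A_1)\oplus(D_2|A_2)$, the fact that $(D_1\oplus D_2)_{\max}=(D_1)_{\max}\oplus(D_2)_{\max}$ (and likewise for the minimum matroids), and the resulting additivity of $\sigma$ and $|A|$, after which the defining sum factorizes. You simply spell out the factorization step more explicitly than the paper does.
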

\begin{proof}
Let $A_1$ and $A_2$ be subsets of the element sets of $D_1$ and $D_2$, and let $A=A_1\cup A_2$. Then $D|A=(D_1|A_1)\oplus (D_2|A_2)$. Moreover for any delta-matroids $D'_1$ and $D'_2$ on disjoint element sets $(D'_1 \oplus D'_2)_{\min} = (D'_1)_{\min} \oplus (D'_2)_{\min}$ and
$(D'_1 \oplus D'_2)_{\max} = (D'_1)_{\max} \oplus (D'_2)_{\max}$, so all the relevant parameters are additive over the components $D_1$ and $D_2$ of $D$. Hence the result follows.
\end{proof}

Following Bouchet~\cite{Bouchet:symm}, for a delta-matroid $D$ and subset $A$ of its elements, let $D*A$ denote the \emph{twist} of $D$ with respect to $A$, that is,
the delta-matroid with the same element set as $D$ such that $F$ is feasible in $D*A$ if and only if $F\bigtriangleup A$ is feasible in $D$. The \emph{dual} $D^*$ of $D=(E,\mathcal F)$ is $D*E$. Observe that $w(D)=w(D^*)$, $D^*/e=(D\ba e)^*$, and $D^*\ba e=(D/ e)^*$.
Recall that a \emph{loop} of $D$ is an element that appears in no feasible set of $D$.
An element $e$ of a delta-matroid $D$ is:
\begin{enumerate}
\item \emph{ordinary} if $e$ is not a loop in $D_{\min}$;
\item \emph{a non-orientable ribbon loop} if $e$ is a loop in both $D_{\min}$ and $(D*e)_{\min}$;
\item \emph{an orientable ribbon loop} if $e$ is a loop in $D_{\min}$ but not in $(D*e)_{\min}$.
\end{enumerate}
Clearly, every element of $D$ is of exactly one of these three types. Moreover, it is not difficult to see that in an even delta-matroid every element is either ordinary or an orientable ribbon loop. Note that every loop is an orientable ribbon loop, but the converse does not generally hold.

The next result follows from Lemma~\ref{lem:RGandBR} and the deletion-contraction recurrences for~$\BR(D)$ (Corollary~5.10 of~\cite{CMNR-LMS}). It is possible to add extra cases corresponding to non-orientable ribbon loops of $D$ or $D^*$, but we omit these as we will not require them.

\begin{proposition}\label{prop:delcon}
For every delta-matroid $D$, the following hold.
\begin{enumerate}
    \item\label{prop:delcon1} If the ground set of $D$ is empty, then $T(D;x,y)=1$.
    \item\label{prop:delcon2} If element $e$ is ordinary in both $D$ and $D^*$, then
    \[ T(D;x,y) = T(D\ba e;x,y) + T(D/e;x,y).\]

    \item\label{prop:delcon3} If element $e$ is ordinary in $D$ and an orientable ribbon loop in $D^*$, then
    \[ T(D;x,y) = (x-1)T(D\ba e;x,y) + T(D/e;x,y).\]

    \item\label{prop:delcon4} If element $e$ is an orientable ribbon loop in $D$ and ordinary in $D^*$, then
    \[ T(D;x,y) = T(D\ba e;x,y) + (y-1)T(D/e;x,y).\]

    \item\label{prop:delcon5} If element $e$ is an orientable ribbon loop in both $D$ and $D^*$, then
    \[ T(D;x,y) = (x-1)T(D\ba e;x,y) + (y-1)T(D/e;x,y). \]
\end{enumerate}
\end{proposition}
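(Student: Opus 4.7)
The plan is to derive all five cases from Lemma~\ref{lem:RGandBR}, which gives $T(D;x,y)=(x-1)^{w(D)/2}\BR(D;x,y-1,1/\sqrt{(x-1)(y-1)})$, combined with the corresponding deletion-contraction recurrences for the Bollob\'as--Riordan polynomial established as Corollary~5.10 of~\cite{CMNR-LMS}. The base case~\eqref{prop:delcon1} is immediate from Equation~\eqref{edef}: the only subset of an empty ground set is $\emptyset$, for which $\sigma(\emptyset)=0$, so the defining sum collapses to $1$.

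For the inductive cases, I would take each BR deletion-contraction identity, substitute $y\mapsto y-1$ and $z\mapsto 1/\sqrt{(x-1)(y-1)}$, and multiply through by $(x-1)^{w(D)/2}$. The right-hand side terms then reassemble into $T(D\ba e;x,y)$ and $T(D/e;x,y)$ once one compensates for the discrepancy between $w(D)$ and $w(D\ba e)$ (respectively $w(D/e)$). This discrepancy is governed by the element type: when $e$ is ordinary in $D$ (resp.\ $D^*$) the width is preserved under contraction (resp.\ deletion), whereas when $e$ is an orientable ribbon loop in $D$ (resp.\ $D^*$) the width drops by $2$ under that operation.

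In case~\eqref{prop:delcon2} all three widths agree and the BR identity transfers directly to yield $T(D)=T(D\ba e)+T(D/e)$. In cases~\eqref{prop:delcon3} and~\eqref{prop:delcon4} exactly one width drops by $2$, and the residual factor $(x-1)$ or $(y-1)$ surfaces when the $\sqrt{(x-1)(y-1)}$ contributed by the $z$-substitution on the BR side is cleared against the ratio of $(x-1)^{w(\cdot)/2}$ prefactors; these two cases are exchanged by the duality $D\leftrightarrow D^*$. Case~\eqref{prop:delcon5} combines both effects and produces $(x-1)$ on the $D\ba e$ term together with $(y-1)$ on the $D/e$ term.

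The main obstacle I anticipate is the bookkeeping of $w(D)-w(D\ba e)$ and $w(D)-w(D/e)$ across the five cases and its interaction with the $z$-substitution: one must verify that the powers of $\sqrt{(x-1)(y-1)}$ arising from the $z$-terms in the BR recurrences combine with the adjustments in the $(x-1)^{w(\cdot)/2}$ prefactors to give exactly integer powers of $(x-1)$ and $(y-1)$, with the correct exponents $0$ or $1$. Once these width changes are tabulated for each element type---something that is either already implicit in the BR recurrences of~\cite[Corollary~5.10]{CMNR-LMS} or requires only a brief direct check from the classification of elements---the remainder of the argument is a routine substitution.
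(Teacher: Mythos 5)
Your proposal is correct and follows essentially the same route as the paper, which likewise derives the proposition by combining Lemma~\ref{lem:RGandBR} with the deletion-contraction recurrences for $\BR(D)$ from Corollary~5.10 of~\cite{CMNR-LMS} (the paper states this without carrying out the substitution details). The width bookkeeping you flag as the main obstacle is exactly the content the paper later isolates as Lemma~\ref{lem:widths}, so your anticipated check is the right one.
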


It is well known that the Tutte polynomials of a matroid $M$ and its dual satisfy $T(M;x,y)=T(M^*;y,x)$. This relation extends to delta-matroids, as shown in Theorem~6.6 of~\cite{CMNR-JCT}.
\begin{proposition}\label{prop:tuttedual}
For every delta-matroid $D$, $T(D;x,y)=T(D^*;y,x)$.
\end{proposition}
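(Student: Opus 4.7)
The plan is to derive the duality relation from Proposition~\ref{prop:previousresult}, which already packages $T(D;x,y)$ as a generating function whose behaviour under $x\leftrightarrow y$ is easy to read off. The two ingredients I will need, both of which should follow directly from the definitions, are a duality formula for the delta-matroid rank function and a duality formula for $\sigma$.

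First I would verify that $\rho_{D^*}(A) = \rho_D(E\setminus A)$: this is a one-line calculation because the feasible sets of $D^*=D*E$ are exactly the complements of the feasible sets of $D$, so the substitution $F \mapsto E\setminus F$ in $\min\{|A\triangle F|\}$ replaces $A$ by $E\setminus A$. Next I would check that $(D^*)_{\max} = (D_{\min})^*$ and $(D^*)_{\min} = (D_{\max})^*$ — the max-size feasible sets of $D^*$ are the complements of the min-size feasible sets of $D$, and these are precisely the bases of the matroid dual. This yields the identities $w(D^*)=w(D)$ and
\[\sigma(D^*) = \tfrac{1}{2}\bigl(r((D^*)_{\max}) + r((D^*)_{\min})\bigr) = \tfrac{1}{2}\bigl((|E|-r(D_{\min}))+(|E|-r(D_{\max}))\bigr) = |E|-\sigma(D).\]

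With these in hand, the calculation proceeds as follows. Apply Proposition~\ref{prop:previousresult} to $D^*$ and then substitute $v \mapsto 1/v$; this swaps the two arguments of $T(D^*;\cdot,\cdot)$, so the left-hand side becomes $v^{-\sigma(D^*)}u^{-w(D)/2}T(D^*;uv+1,u/v+1)$. On the right-hand side, re-index the sum by $B = E\setminus A$ and use $\rho_{D^*}(A) = \rho_D(B)$ to convert it, after extracting a factor $v^{-|E|}$, into $v^{-|E|}\sum_{B}v^{|B|}u^{|E|-\rho_D(B)}$. Proposition~\ref{prop:previousresult} applied to $D$ identifies this last sum as $v^{\sigma(D)}u^{-w(D)/2}T(D;u/v+1,uv+1)$. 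Equating the two expressions and cancelling $u^{-w(D)/2}$ gives
\[T(D^*;uv+1,u/v+1) = v^{\,\sigma(D^*)+\sigma(D)-|E|}\,T(D;u/v+1,uv+1),\]
and the exponent of $v$ vanishes by the $\sigma$-identity above.

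I do not expect any serious obstacle; the only delicate point is making sure the algebraic manipulation in $u,v$ is legitimate as a Laurent-polynomial identity (so that substituting $v\mapsto 1/v$ is allowed and coefficients may be compared), but this is guaranteed because Proposition~\ref{prop:previousresult} holds as an identity in $\mathbb{Z}[u,u^{-1},v,v^{-1}]$ after clearing denominators. An entirely routine alternative would be an induction on $|E|$ using Proposition~\ref{prop:delcon} together with $D^*/e = (D\ba e)^*$ and $D^*\ba e = (D/e)^*$; one checks that the five element-type cases pair up correctly under $D\leftrightarrow D^*$ with $x\leftrightarrow y$, but this route is longer and less illuminating than the generating-function argument above.
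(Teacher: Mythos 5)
Your argument is correct, but it is not the route the paper takes: the paper gives no self-contained proof of Proposition~\ref{prop:tuttedual}, instead citing Theorem~6.6 of the reference \cite{CMNR-JCT} and remarking that for even $D$ the statement can be obtained by induction from Proposition~\ref{prop:delcon} (the route you dismiss as an afterthought). Your derivation from Proposition~\ref{prop:previousresult} checks out: the identities $\rho_{D^*}(A)=\rho_D(E\setminus A)$, $(D^*)_{\max}=(D_{\min})^*$, $(D^*)_{\min}=(D_{\max})^*$, $w(D^*)=w(D)$ and $\sigma(D^*)=|E|-\sigma(D)$ all follow from the fact that the feasible sets of $D^*$ are the complements of those of $D$, and the substitution $v\mapsto 1/v$ followed by re-indexing $B=E\setminus A$ yields exactly the claimed cancellation of the $v$-exponent. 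What your approach buys is a proof valid for \emph{all} delta-matroids, not just even ones, and one that is self-contained given the quoted Proposition~\ref{prop:previousresult}, whereas the induction via Proposition~\ref{prop:delcon} as stated in the paper only covers the even case (the non-orientable ribbon loop cases are deliberately omitted there). The one step you should make explicit at the end: after cancelling powers of $u$ and $v$ you have an identity of rational functions in $u,v$, and to recover the polynomial identity $T(D^*;y,x)=T(D;x,y)$ you need that $x-1=u/v$ and $y-1=uv$ are algebraically independent over $\mathbb{Q}$ (equivalently, $u$ and $v$ are determined up to a common sign by $u/v$ and $uv$), so that the substitution is faithful on polynomials; this is routine but it is the step that converts your Laurent identity into the stated proposition, and it is a slightly different issue from the legitimacy of the $v\mapsto 1/v$ substitution that you flag.
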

When $D$ is even, this can be proved using Proposition~\ref{prop:delcon} and   induction.

Let $D$ be a delta-matroid.
Recall that a~\emph{coloop} of $D$ is an element appearing in every feasible set of~$D$. Clearly an element $e$ of $D$ is a loop if and only if it is a coloop of~$D^*$.

\begin{corollary}\label{cor:loopscoloops}
For every delta-matroid~$D$, the following hold.
\begin{enumerate}
    \item If element $e$ is a coloop of $D$, then $T(D;x,y)=xT(D/e;x,y)=xT(D\ba e;x,y)$.
    \item If element $e$ is a loop of $D$, then $T(D;x,y)=yT(D/e;x,y)=yT(D\ba e;x,y)$.
        \end{enumerate}
\end{corollary}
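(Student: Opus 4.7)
The plan is to deduce both parts from Proposition~\ref{prop:delcon} by identifying the type of a coloop (and, by duality, of a loop) as an element of $D$ and of $D^*$, and then using the convention that $D\ba e:=D/e$ when $e$ is a coloop and $D/e:=D\ba e$ when $e$ is a loop to collapse the resulting recurrence.

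First I would handle the coloop case. Suppose $e$ lies in every feasible set of $D$. Then in particular $e$ lies in every minimum-size feasible set, so $e$ is not a loop of $D_{\min}$, i.e.\ $e$ is ordinary in $D$. Since $F\in\F(D^*)$ iff $E\setminus F\in\F(D)$, the element $e$ is a loop of $D^*$ and hence of $D^*_{\min}$. A direct inspection of the twist shows that the feasible sets of $D^**e$ are exactly the sets of the form $F\cup\{e\}$ with $F\in\F(D^*)$, so every feasible set of $D^**e$ contains $e$; in particular $e$ is not a loop of $(D^**e)_{\min}$, making $e$ an orientable ribbon loop of $D^*$. Applying case~\ref{prop:delcon3} of Proposition~\ref{prop:delcon} then yields
\[ T(D;x,y)=(x-1)T(D\ba e;x,y)+T(D/e;x,y), \]
which, by the coloop convention $D\ba e=D/e$, collapses to $xT(D/e;x,y)=xT(D\ba e;x,y)$.

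For the loop case I would invoke duality, since $e$ is a loop of $D$ iff $e$ is a coloop of $D^*$. Applying part~1 to $D^*$ and using $D^*/e=(D\ba e)^*$ gives $T(D^*;x,y)=xT((D\ba e)^*;x,y)$; swapping variables and applying Proposition~\ref{prop:tuttedual} then yields
\[ T(D;x,y)=T(D^*;y,x)=y\,T((D\ba e)^*;y,x)=y\,T(D\ba e;x,y), \]
and the loop convention $D/e=D\ba e$ supplies the second equality. Alternatively one can reclassify $e$ as an orientable ribbon loop of $D$ that is ordinary in $D^*$ and apply case~\ref{prop:delcon4} directly.

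The main obstacle is purely bookkeeping: one must be careful to verify the \emph{orientable} (rather than non-orientable) ribbon loop classification for the dual of a coloop, and to apply the loop/coloop convention consistently so that the $(x-1)$ and $(y-1)$ factors in Proposition~\ref{prop:delcon} genuinely combine into a single $x$ or $y$. Once these checks are in place the proof is a direct substitution.
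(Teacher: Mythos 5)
Your proof is correct and takes the route the paper evidently intends (the corollary is stated without proof as an immediate consequence of Proposition~\ref{prop:delcon}): a coloop is ordinary in $D$ and an orientable ribbon loop in $D^*$, so case~(\ref{prop:delcon3}) applies and collapses to $xT(D/e;x,y)$ under the convention $D\ba e=D/e$, with the loop case following by duality or by case~(\ref{prop:delcon4}). The classification checks you flag as the main obstacle are all carried out correctly.
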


One of the many well known evaluations of the Tutte polynomial of a matroid $M$ is that $T(M;1,1)$ is equal to the number of bases of $M$. As shown in~\cite{CMNR-JCT},  this evaluation only extends to $T(D)$ when $D$ is a matroid.
\begin{proposition}
\label{prop:countbases}
For every delta-matroid $D$,
\[ T(D;1,1) = \begin{cases} \text{the number of bases of $D$} & \text{if $D$ is a matroid,} \\ 0 & \text{otherwise.}\end{cases}\]
\end{proposition}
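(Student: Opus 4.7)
The plan is to evaluate the identity of Proposition~\ref{prop:previousresult} at $(u,v) = (0,1)$, which under the substitution $x = u/v+1$, $y = uv+1$ corresponds to $(x,y) = (1,1)$. First I would clear the singular factor $u^{-w(D)/2}$ by multiplying through, obtaining
\[
v^{\sigma(D)}\, T(D; u/v+1, uv+1) \;=\; u^{w(D)/2} \sum_{A\subseteq E} v^{|A|}\, u^{|E|-\rho_D(A)}.
\]
At $u=0$, only those $A$ with $|E|-\rho_D(A)=0$ survive on the right-hand side, and by the definition of $\rho_D$ these are exactly the feasible sets of $D$. So the right-hand side collapses to $0^{w(D)/2}\cdot |\mathcal{F}(D)|$.

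From here the two cases of the proposition fall out immediately. If $D$ is not a matroid, then its feasible sets do not all have the same size, so $w(D)=r(D_{\max})-r(D_{\min})>0$; the factor $0^{w(D)/2}$ then vanishes (and stays $0$ whether $w(D)$ is even or odd, since any positive power of $0$ is $0$), forcing $T(D;1,1)=0$. If $D$ is a matroid, all feasible sets are bases of the common size $r(D)=\sigma(D)$, so $w(D)=0$, the prefactor is $1$, and setting $v=1$ reduces the identity to $T(D;1,1)=|\mathcal{F}(D)|$, the number of bases of $D$.

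The one point that warrants a little care is justifying the substitution $u=0$, because \eqref{edef} may involve half-integer exponents when $w(D)$ is odd, so a priori the expression $T(D;u/v+1,uv+1)$ is not manifestly a polynomial in $u$. Lemma~\ref{lem:RGandBR} resolves this: under the substitution $(x-1)(y-1)=u^2$ we have $\sqrt{(x-1)(y-1)}=u$, so the fractional powers in $T(D;u/v+1,uv+1)$ are absorbed into integer powers of $u$, and the rearranged identity is a well-defined formal identity that can be evaluated at $u=0$. I expect this bit of bookkeeping to be the only delicate step; the remainder is essentially a direct evaluation.
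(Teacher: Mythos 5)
Your proof is correct. Note, though, that the paper itself gives no proof of Proposition~\ref{prop:countbases}: it simply attributes the statement to the reference \cite{CMNR-JCT}, so there is no in-paper argument to compare yours against. Your derivation from Proposition~\ref{prop:previousresult} is sound: setting $v=1$, the $u$-exponent of the term indexed by $A$ on the rearranged right-hand side is $w(D)/2+|E|-\rho_D(A)$, which is zero precisely when $w(D)=0$ (i.e.\ $D$ is a matroid) and $\rho_D(A)=|E|$ (i.e.\ $A$ is feasible), and is strictly positive otherwise; this gives exactly the two cases of the statement, and your appeal to Lemma~\ref{lem:RGandBR} correctly disposes of the worry about half-integer exponents of $u$ when $w(D)$ is odd. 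The one alternative worth mentioning is that the same bookkeeping can be done directly on the defining sum \eqref{edef}: at $x=y=1$ the term for $A$ contributes $0^{\sigma(E)+|A|-2\sigma(A)}$, and $\sigma(E)+|A|-2\sigma(A)=w(D)/2+|E|-\rho_D(A)$ is the same exponent you isolate, so the evaluation falls out without routing through the $(u,v)$ change of variables. Either way the argument is complete.
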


We easily obtain the following corollaries.
\begin{corollary}\label{cor:countbases1}
Let $D$ be a delta-matroid that is not a matroid, and let $M$ be a matroid. Then $T(D;x,y)\ne T(M;x,y)$.
\end{corollary}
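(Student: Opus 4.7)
The proof is essentially immediate from Proposition~\ref{prop:countbases}. The plan is to evaluate both polynomials at the single point $(x,y)=(1,1)$ and show that the values disagree, which forces the polynomials themselves to be distinct.

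Specifically, since $D$ is not a matroid, Proposition~\ref{prop:countbases} yields $T(D;1,1)=0$. On the other hand, since $M$ is a matroid, the same proposition gives $T(M;1,1)$ equal to the number of bases of $M$. By the definition of a delta-matroid recalled in Section~\ref{dm}, the collection $\mathcal F$ of feasible sets is required to be non-empty, so a matroid has at least one basis; hence $T(M;1,1)\ge 1$. Comparing, $T(D;1,1)=0\neq T(M;1,1)$, and therefore $T(D;x,y)$ and $T(M;x,y)$ cannot be equal as polynomials in $\mathbb{Z}[x,y]$.

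There is no real obstacle in the argument, because the substantive work has already been carried out in establishing Proposition~\ref{prop:countbases}. An alternative route would be to appeal to item~(6) of Corollary~\ref{cor:basicparams}, which states directly that $T(D)$ determines whether or not $D$ is a matroid; but the evaluation at $(1,1)$ is more concrete and exhibits an explicit point that separates the two polynomials, so I would present the argument that way.
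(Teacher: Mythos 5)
Your proof is correct and is exactly the argument the paper intends: the corollary is stated as an immediate consequence of Proposition~\ref{prop:countbases}, via the evaluation $T(D;1,1)=0$ versus $T(M;1,1)\geq 1$. No further comment is needed.
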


\begin{corollary}\label{cor:countbases2}
Let $\bG$ be a non-plane ribbon graph, and let $H$ be a graph. Then
$R(\bG;x,y)\ne T(H;x,y)$.
\end{corollary}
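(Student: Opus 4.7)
The plan is to reduce this corollary directly to Corollary~\ref{cor:countbases1} by transporting both sides of the claimed inequality into the world of delta-matroids. Concretely, I would argue as follows.

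First, I would translate the left-hand side. The paper records that $R(\bG;x,y) = T(D(\bG);x,y)$, so the claim $R(\bG;x,y) \ne T(H;x,y)$ is equivalent to $T(D(\bG);x,y) \ne T(H;x,y)$. Next, I would translate the right-hand side. The Tutte polynomial $T(H;x,y)$ of a graph agrees (by the standard rank-nullity definition) with the Tutte polynomial of its cycle matroid $M(H)$, which in turn, since matroids are delta-matroids with equicardinal feasible sets, coincides with the delta-matroid Tutte polynomial $T(M(H);x,y)$ defined in~\eqref{edef} (as noted in the sentence just after Equation~\eqref{edef}, $\sigma = r$ in the matroid case). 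So the claim becomes $T(D(\bG);x,y) \ne T(M(H);x,y)$.

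Second, I would invoke the characterisation of planarity in terms of the delta-matroid $D(\bG)$. By part~(7) of Corollary~\ref{cor:basicparams}, $D(\bG)$ is a matroid if and only if $\bG$ is plane; equivalently, since $w(D(\bG)) = \gamma(\bG)$, non-planarity of $\bG$ is equivalent to $w(D(\bG)) > 0$, which forces the feasible sets of $D(\bG)$ to have more than one distinct cardinality, so $D(\bG)$ is not a matroid.

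Finally, I would apply Corollary~\ref{cor:countbases1} with $D = D(\bG)$ (not a matroid) and $M = M(H)$ to conclude $T(D(\bG);x,y) \ne T(M(H);x,y)$, which gives the desired $R(\bG;x,y) \ne T(H;x,y)$. There is no real obstacle here: the entire content is packaged in Corollary~\ref{cor:countbases1} (which in turn rests on Proposition~\ref{prop:countbases} distinguishing $T(D;1,1)$ for matroids versus non-matroids), and the only things to check are the two dictionary statements $R(\bG) = T(D(\bG))$ and $T(H) = T(M(H))$, together with the planarity criterion for $D(\bG)$ to be a matroid.
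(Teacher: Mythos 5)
Your argument is correct and is exactly the intended one: the paper derives this corollary from Corollary~\ref{cor:countbases1} via the dictionary $R(\bG;x,y)=T(D(\bG);x,y)$, $T(H;x,y)=T(M(H);x,y)$, and the fact that $w(D(\bG))=\gamma(\bG)>0$ means $D(\bG)$ is not a matroid. Nothing is missing.
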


\begin{corollary}\label{cor:countbases3}
Let $\bG$ be a ribbon graph and $H$ a graph such that  $R(\bG;x,y)=T(H;x,y)$. Then $\bG$ is plane.
\end{corollary}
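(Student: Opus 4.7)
The plan is to observe that Corollary~\ref{cor:countbases3} is nothing more than the contrapositive of Corollary~\ref{cor:countbases2}. So I would not need any new machinery at all.

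In more detail, I would argue as follows. Suppose for contradiction that $\bG$ is not plane. Then Corollary~\ref{cor:countbases2} applies directly to the ribbon graph $\bG$ and the graph $H$, yielding $R(\bG;x,y)\neq T(H;x,y)$, contradicting the hypothesis $R(\bG;x,y)=T(H;x,y)$. Hence $\bG$ must be plane.

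There is no real obstacle here: the whole content has already been packaged into Corollary~\ref{cor:countbases2}, which in turn rests on Corollary~\ref{cor:countbases1} via the identification $R(\bG;x,y)=T(D(\bG);x,y)$ together with the fact that $D(\bG)$ is a matroid if and only if $\bG$ is plane (recorded in Corollary~\ref{cor:basicparams}(7)). The proof is thus a one-line deduction; the only small thing to double-check while writing it up is that one indeed needs no extra hypothesis on $H$ (which is fine, since the underlying evaluation $T(D;1,1)=0$ for non-matroid delta-matroids holds against \emph{any} matroid Tutte polynomial, which at $(1,1)$ counts bases and hence is a positive integer).
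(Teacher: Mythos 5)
Your proof is correct and matches the paper's intent: Corollary~\ref{cor:countbases3} is simply the contrapositive of Corollary~\ref{cor:countbases2}, which the paper presents as one of several corollaries "easily obtained" from Proposition~\ref{prop:countbases}. Your additional remark about why no hypothesis on $H$ is needed (the evaluation at $(1,1)$ is $0$ for a non-matroid delta-matroid but a positive base count for any matroid) is exactly the right sanity check.
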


\section{The Beta Invariant}
The beta invariant of a matroid was introduced by Crapo in~\cite{Crapo} and encapsulates a surprisingly large amount of information. It was first extended to delta-matroids in~\cite{MMN}, where the focus was on the transition polynomial rather than the Tutte polynomial. Here we define the beta invariant of a  delta-matroid as follows.
\begin{definition}\label{defbeta}
The \emph{beta invariant}, $\beta(D)$, of a delta-matroid $D$ is the coefficient of $x$ in $T(D;x,y)$.
\end{definition}

For the special case of matroids, this coincides with Crapo's definition from~\cite{Crapo}. This is not the case for the invariant introduced in~\cite{MMN} and defined in terms of the transition polynomial, but the invariant described here is easily computed from the one introduced in~\cite{MMN} and vice versa. Versions of Theorems~\ref{thm:betaconn} and~\ref{thm:serpar} below
appear in~\cite{MMN}, phrased in terms of the transition polynomial.
For completeness, we include proofs which do not depend on any results from~\cite{MMN}.

\medskip

To deduce properties of $\beta(D)$, we need the following results.
The first follows immediately from the definition of the Tutte polynomial given in Equation~\eqref{edef}.
\begin{lemma}\label{lem:constant}
For every delta-matroid $D$ with element set $E$,
\[ T(D;0,0) = \sum_{A\subseteq E} (-1)^{\sigma(E)-|A|} = \begin{cases} 0 &\text{if $E\ne\emptyset$,} \\ 1 &\text{if $E=\emptyset$.}\end{cases}\]
\end{lemma}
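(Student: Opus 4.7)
The proof is a direct evaluation from the definition in Equation~(\ref{edef}). The plan is to substitute $x=y=0$, combine the $-1$-exponents so that the summand depends on $A$ only through $|A|$, and then apply the standard binomial identity $\sum_{A\subseteq E}(-1)^{|A|}=(1-1)^{|E|}$.

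Substituting $x=y=0$ into Equation~(\ref{edef}) gives
\[T(D;0,0)=\sum_{A\subseteq E}(-1)^{\sigma(E)-\sigma(A)}(-1)^{|A|-\sigma(A)}=\sum_{A\subseteq E}(-1)^{\sigma(E)+|A|-2\sigma(A)}.\]
Since each exponent appearing in a summand of $T(D;x,y)$ is an integer whenever the term evaluates to a well-formed value (recalling that $\sigma(A)=r((D|A)_{\min})+w(D|A)/2$), we have $(-1)^{2\sigma(A)}=1$; together with $(-1)^{2|A|}=1$ this lets us rewrite the sign as $(-1)^{\sigma(E)-|A|}$, establishing the first claimed equality.

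For the second equality, factor the constant $(-1)^{\sigma(E)}$ out of the sum and apply the binomial theorem:
\[\sum_{A\subseteq E}(-1)^{\sigma(E)-|A|}=(-1)^{\sigma(E)}\sum_{A\subseteq E}(-1)^{|A|}=(-1)^{\sigma(E)}(1-1)^{|E|}.\]
When $E\neq\emptyset$ this vanishes regardless of the prefactor; when $E=\emptyset$ we have $\sigma(E)=0$, so the prefactor is $1$ and the empty sum contributes $1$, giving the claim.

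The argument is pure bookkeeping and has no real obstacle; the only small point worth flagging is the collapse of the combined sign to $(-1)^{\sigma(E)-|A|}$, which uses that the exponents in the defining sum of $T(D;x,y)$ are integers and hence $2\sigma(A)$ is even in every term that contributes.
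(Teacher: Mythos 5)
Your proof is correct and takes the same route as the paper, which simply asserts that the first equality ``follows immediately from the definition'' in Equation~\eqref{edef} and supplies no further detail. The one point you rightly flag --- that collapsing the sign to $(-1)^{\sigma(E)-|A|}$ requires $(-1)^{2\sigma(A)}=1$, i.e.\ $\sigma(A)\in\mathbb{Z}$ --- is guaranteed precisely when $D|A$ is even, so that $r((D|A)_{\min})$ and $r((D|A)_{\max})$ have the same parity; this holds for the even delta-matroids to which the paper actually applies the lemma, whereas for a general delta-matroid $\sigma(A)$ may be a half-integer (so your blanket claim that the exponents are integers is not literally true, and the statement then needs a convention for half-integer powers of $-1$), a caveat inherited from the paper's own phrasing rather than a defect of your argument.
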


\begin{proposition}\label{prop:x=y}
Let $D$ be an even delta-matroid with at least two elements. Then the coefficients of $x$ and of $y$ in $T(D;x,y)$ are equal.
\end{proposition}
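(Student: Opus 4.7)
The plan is to extract the coefficients of $x$ and $y$ directly from the defining sum in Equation~\eqref{edef} via the binomial theorem, and then show the difference vanishes by a pair of elementary binomial identities.

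First I would observe that the hypothesis that $D$ is even is used to guarantee that every exponent in Equation~\eqref{edef} is a non-negative integer. Indeed, since $D$ is even and evenness is preserved under deletion, every restriction $D|A$ is even, so $w(D|A) = r((D|A)_{\max}) - r((D|A)_{\min})$ is even, and hence $\sigma(A) = r((D|A)_{\min}) + w(D|A)/2$ is an integer. This lets me expand $(t-1)^n = \sum_k \binom{n}{k}(-1)^{n-k}t^k$ without worrying about fractional powers, giving $[t^0](t-1)^n = (-1)^n$ and $[t^1](t-1)^n = n(-1)^{n-1}$ (with the latter being $0$ when $n=0$, which is harmless).

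Second, I would apply these two expansions term by term to the sum in \eqref{edef}. Using that $(-1)^{2\sigma(A)} = 1$ because $\sigma(A)$ is an integer, the contributions collapse to
\[ [x^1]T(D;x,y) = -(-1)^{\sigma(E)}\sum_{A\subseteq E}\bigl(\sigma(E)-\sigma(A)\bigr)(-1)^{|A|} \]
and
\[ [y^1]T(D;x,y) = -(-1)^{\sigma(E)}\sum_{A\subseteq E}\bigl(|A|-\sigma(A)\bigr)(-1)^{|A|}. \]
Subtracting, the $\sigma(A)$ terms cancel and I am left with
\[ [x^1]T(D;x,y) - [y^1]T(D;x,y) = -(-1)^{\sigma(E)}\left[\sigma(E)\sum_{A\subseteq E}(-1)^{|A|} - \sum_{A\subseteq E}|A|(-1)^{|A|}\right]. \]

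Finally, I would invoke the two standard identities $\sum_{A\subseteq E}(-1)^{|A|} = (1-1)^{|E|}$ and $\sum_{A\subseteq E}|A|(-1)^{|A|} = -|E|(1-1)^{|E|-1}$. The first vanishes once $|E| \geq 1$, and the second vanishes once $|E| \geq 2$, which is exactly where the hypothesis $|E| \geq 2$ enters (as it must, since a single coloop gives $T = x$ and a single loop gives $T = y$). There is no real obstacle: the entire argument is a careful sign-bookkeeping exercise, and the evenness hypothesis plays the quiet but essential role of keeping $\sigma(A)$ integer-valued so that the expansion of $(x-1)^{\sigma(E)-\sigma(A)}(y-1)^{|A|-\sigma(A)}$ behaves as an ordinary polynomial.
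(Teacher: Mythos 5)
Your proof is correct, but it is not the paper's proof: the paper argues by induction on $|E|$, checking the two-element base cases explicitly (the two connected ones give $x+y$ and $2xy-x-y$) and pushing the claim through the deletion--contraction relations of Proposition~\ref{prop:delcon} with the help of Lemma~\ref{lem:constant}, whereas you extract the coefficients of $x$ and $y$ directly from the state sum~\eqref{edef} and kill their difference with the identities $\sum_{A\subseteq E}(-1)^{|A|}=0$ for $|E|\geq 1$ and $\sum_{A\subseteq E}|A|(-1)^{|A|}=0$ for $|E|\geq 2$. I checked your computation: both coefficients equal $-(-1)^{\sigma(E)}$ times the respective sums, the $\sigma(A)$ terms cancel in the difference, and the evenness hypothesis is correctly located (it makes each $\sigma(A)$ an integer, since $D|A$ is again even, so $(-1)^{2\sigma(A)}=1$ and the summand is an honest polynomial). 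Your route is more self-contained and arguably more transparent: it needs neither the imported deletion--contraction recurrence nor the classification of two-element even delta-matroids, and it pinpoints exactly why $|E|\geq 2$ is needed. The paper's inductive route has the advantage of fitting the template used for Proposition~\ref{prop:keysignsstuff} and Theorem~\ref{thm:betaconn}, and its base-case computations are reused verbatim later (the proof of Theorem~\ref{thm:betaconn} cites ``the proof of Proposition~\ref{prop:x=y}''). One small point to tighten: you assert that every exponent in~\eqref{edef} is a \emph{non-negative} integer but only justify integrality; non-negativity of $|A|-\sigma(A)$ is immediate from $\sigma(A)\leq r((D|A)_{\max})\leq|A|$, while $\sigma(A)\leq\sigma(E)$ is a standard monotonicity fact (implicit in the paper's treatment of $T(D)$ as a polynomial and in Lemma~\ref{lem:topcoeffsR}) that you should cite or prove rather than assume.
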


\begin{proof}
We proceed by induction on the number of elements of $D$.
We begin by considering all the possibilities for $D$ when it has two elements.
If $D$ has two elements and is disconnected, then each element is either a loop or a coloop, so by Corollary~\ref{cor:loopscoloops} both the coefficients of $x$ and $y$ are zero. If $D$ has two elements and is connected, then it is equal to $D(\bG)$ where $\bG$ is either the plane cycle with two edges or the genus one orientable ribbon graph with one vertex and two edges. In the former case $T(D;x,y)=x+y$ and in the latter case $T(D;x,y)= 2xy-x-y$. So the result holds when $D$ has two elements.

The inductive step follows by combining Proposition~\ref{prop:delcon} with Lemma~\ref{lem:constant}.
\end{proof}

The following technical lemma is needed in most of the results of this section. An orientable ribbon loop that is not a loop is said to be a \emph{non-trivial orientable ribbon loop}.

\begin{lemma}\label{lem:widths}
For an even delta-matroid $D$ and element $e$ of $D$, the following hold.
\begin{enumerate}
    \item If $e$ is a loop or coloop of $D$, then $w(D\ba e)=w(D/e)=w(D)$.
    \item If $e$ is ordinary in both $D$ and $D^*$, then $w(D\ba e)=w(D/e)=w(D)$.
    \item If $e$ is a non-trivial orientable ribbon loop in $D$ and ordinary in $D^*$, then $w(D\ba e)=w(D)$ and $w(D/e)=w(D)-2$.
    \item If $e$ is ordinary in $D$ and a non-trivial orientable ribbon loop in $D^*$, then $w(D\ba e)=w(D)-2$ and $w(D/ e)=w(D)$.
    \item If $e$ is an orientable ribbon loop in both $D$ and $D^*$, then
    $w(D\ba e)=w(D/e)=w(D)-2$.
\end{enumerate}
\end{lemma}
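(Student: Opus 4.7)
The plan is to set up bookkeeping in terms of four quantities and then check each of the five cases by applying the symmetric exchange axiom. For $\star\in\{e,\bar e\}$ and $\bullet\in\{\min,\max\}$, let $r^\star_\bullet$ denote the minimum or maximum size of a feasible set of $D$ that contains $e$ (when $\star=e$) or avoids $e$ (when $\star=\bar e$), with the conventions $r^e_\bullet=+\infty$ if $e$ is a loop and $r^{\bar e}_\bullet=-\infty$ if $e$ is a coloop. Directly from the definitions of deletion and contraction,
\[
w(D\ba e)=r^{\bar e}_{\max}-r^{\bar e}_{\min},\qquad w(D/e)=r^e_{\max}-r^e_{\min},
\]
while $w(D)=\max(r^e_{\max},r^{\bar e}_{\max})-\min(r^e_{\min},r^{\bar e}_{\min})$. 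The five hypotheses translate into inequalities among these four numbers: $e$ is ordinary in $D$ iff $r^e_{\min}\leq r^{\bar e}_{\min}$, a ribbon loop in $D$ iff $r^{\bar e}_{\min}<r^e_{\min}$, and analogously for $D^*$ in terms of $r^e_{\max}$ and $r^{\bar e}_{\max}$. Since $D$ is even, the four quantities all share a single parity, so any strict inequality is a gap of at least $2$.

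The engine is a strengthening of the symmetric exchange axiom valid for even delta-matroids: for $F_1,F_2\in\F(D)$ and $x\in F_1\bigtriangleup F_2$ there is some $y\in(F_1\bigtriangleup F_2)\setminus\{x\}$ with $F_1\bigtriangleup\{x,y\}\in\F(D)$. (In the ordinary axiom the choice $y=x$ would force $F_1\bigtriangleup\{x\}$ to be feasible, of the wrong parity.) Each case then follows a uniform template: choose $F_1$ and $F_2$ to extremise two of the $r^\star_\bullet$, note $e\in F_1\bigtriangleup F_2$, and apply strong exchange with $x=e$. The element $y$ lies either in $F_1\setminus F_2$ or in $F_2\setminus F_1$, and the two possibilities produce feasible sets of two predictable sizes; typically one of those sizes is forbidden by the standing hypothesis, and the other pins down one of the four quantities exactly.

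Case~(1) is immediate from the definitions: when $e$ is a loop or coloop, $D\ba e=D/e$ is obtained from $D$ by dropping $e$ from the ground set or from every feasible set, so $r_{\min}$ and $r_{\max}$ shift uniformly and $w$ is preserved. In case~(2), applying strong exchange to a minimum feasible set containing $e$ (taking the role of $F_1$) and a maximum feasible set not containing $e$ produces either a feasible set of size $r_{\min}(D)-2$ (ruled out by minimality) or one of size $r_{\min}(D)$ not containing $e$, forcing $r^{\bar e}_{\min}=r_{\min}(D)$; interchanging the roles so that the toggle now inserts $e$ forces $r^e_{\max}=r_{\max}(D)$, and both widths equal $w(D)$. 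Cases~(3) and~(4) replace one ordinary hypothesis by a ribbon-loop hypothesis: applying exchange between a minimum feasible set of $D$ (necessarily avoiding $e$) and a minimum feasible set containing $e$ rules out the outcome that would yield a feasible set of size $r_{\min}(D)$ containing $e$, forcing $r^e_{\min}=r^{\bar e}_{\min}+2$, while a further exchange at the top pins the relevant $\omega$. Case~(5) combines both arguments to push both gaps to $2$ simultaneously.

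The main obstacle is organisational rather than conceptual: at each application of strong exchange one has to confirm that the two possible positions of $y$ genuinely exhaust all cases and that precisely one is excluded by the standing hypothesis, so that what emerges is an exact pinning-down rather than a one-sided inequality. One also has to be careful which feasible set plays the role of $F_1$, since the axiom modifies $F_1$, and swapping $F_1$ with $F_2$ produces a different feasible set of a different size. Once these choices are set, cases~(3)--(5) are duals or self-duals of one another and require no further ideas.
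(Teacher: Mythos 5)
Your proof is correct and follows essentially the same route as the paper's: both arguments pin down the minimum and maximum sizes of feasible sets containing or avoiding $e$ by applying the symmetric exchange axiom (with evenness forcing the exchanged element to differ from $e$, so that sizes jump by exactly $2$), and both use duality to transfer the minimum-rank computations to the maximum-rank ones. The only difference is organisational: the paper computes $r((D\ba e)_{\min})$ and $r((D/e)_{\min})$ once for each type of element and then dualises, whereas you track all four extremal quantities symmetrically case by case.
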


\begin{proof}
If $e$ is a loop of $D$, then $\mathcal F(D)=\mathcal F(D\ba e) = \mathcal F(D/e)$. Therefore $w(D\ba e)=w(D/e)=w(D)$. By duality, the same conclusion holds if $e$ is a coloop of $D$. This proves~(1).

From now on we assume that $e$ is neither a loop nor a coloop of $D$. It follows from the symmetric exchange axiom that a coloop of $D_{\min}$ is a coloop of $D$. Thus $e$ is not a coloop of $D_{\min}$.
 Hence $r((D\ba e)_{\min})=r(D_{\min})$.
If $e$ is ordinary in $D$, then $r((D/e)_{\min}) =r(D_{\min})-1$. Now suppose that $e$ is a non-trivial orientable ribbon loop in $D$. Then as $e$ is not a loop, there is a feasible set containing $e$. Let $F_2$ be such a feasible set of minimum possible size. Let $F_1$ be a basis of $D_{\min}$. Applying the symmetric exchange axiom to $F_1$, $F_2$ and $e$, we deduce that there is a feasible set of $D$ containing $e$ and having size at most $r(D_{\min})+2$. As $D$ is even and $e$ is an orientable ribbon loop, such a set must have size $r(D_{\min})+2$. Thus $r((D/e)_{\min}) =r(D_{\min})+1$. Using duality, we see that $r((D/ e)_{\max})=r(D_{\max})-1$; if $e$ is ordinary in $D^*$ then $r(D\ba e)_{\max} =r(D_{\max})$, and if $e$ is a non-trivial orientable ribbon loop in $D^*$ then $r((D\ba e)_{\max}) =r(D_{\max})-2$.

Each of the remaining parts of the result now follows by applying the definition of width.
\end{proof}

\begin{proposition}\label{prop:keysignsstuff}
Let $D$ be an even delta-matroid with at least two elements.
Then $\beta(D)$ is either zero or has the same sign as $(-1)^{w(D)/2}$. Moreover, if $D$ has at least three elements and $e$ is an element of $D$ that is neither a loop nor a coloop, then the coefficient of $x$ in both terms appearing on the right side of the equation in each of parts (\ref{prop:delcon2})--(\ref{prop:delcon5}) of Proposition~\ref{prop:delcon} is either zero or has the same sign as $(-1)^{w(D)/2}$.
\end{proposition}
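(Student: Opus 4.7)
The plan is to induct on $n := |E(D)|$. For the base case $n = 2$, I would use the enumeration already given in the proof of Proposition~\ref{prop:x=y}: if $D$ is disconnected, both elements are loops or coloops, so by Corollary~\ref{cor:loopscoloops} $T(D;x,y)$ is one of $x^2$, $xy$, $y^2$ and $\beta(D) = 0$; if $D$ is connected, either $T(D;x,y) = x + y$ (with $w(D) = 0$, so $\beta(D) = 1 = (-1)^0$) or $T(D;x,y) = 2xy - x - y$ (with $w(D) = 2$, so $\beta(D) = -1 = (-1)^1$). The moreover clause is vacuous when $n = 2$.

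For the inductive step with $n \geq 3$, I first dispose of the case where $D$ has a loop or coloop $f$. Corollary~\ref{cor:loopscoloops} gives $T(D;x,y)$ as either $y\,T(D/f;x,y)$ or $x\,T(D/f;x,y)$; in the former the coefficient of $x$ is zero by inspection, and in the latter it equals $T(D/f;0,0)$, which vanishes by Lemma~\ref{lem:constant} since $D/f$ has $n-1\geq 2$ elements. Either way $\beta(D) = 0$. Otherwise $D$ has no loops or coloops; pick any element $e$. Evenness of $D$ forces $e$ to be ordinary or a (necessarily non-trivial) orientable ribbon loop in each of $D$ and $D^*$, so exactly one of parts (\ref{prop:delcon2})--(\ref{prop:delcon5}) of Proposition~\ref{prop:delcon} applies. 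A useful auxiliary observation is that for any delta-matroid $D'$ with at least one element, $T(D';0,0) = 0$ by Lemma~\ref{lem:constant}, hence the coefficient of $x$ in $(x-1)\,T(D';x,y) = x\,T(D';x,y) - T(D';x,y)$ is $-\beta(D')$; similarly, because $T(D';x,y)$ has no term of $x$-degree $1$ and $y$-degree $-1$, the coefficient of $x$ in $(y-1)\,T(D';x,y)$ is also $-\beta(D')$.

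The heart of the argument is a sign comparison: Lemma~\ref{lem:widths} tells us that across cases (\ref{prop:delcon2})--(\ref{prop:delcon5}), each of $w(D\ba e)$ and $w(D/e)$ equals either $w(D)$ or $w(D) - 2$, and moreover $w(D') = w(D) - 2$ occurs for exactly those $D'$ whose term carries a prefactor $(x-1)$ or $(y-1)$ in the recurrence. When $w(D') = w(D)$, the corresponding term contributes $\beta(D')$ to the coefficient of $x$, which by the inductive hypothesis is either zero or has sign $(-1)^{w(D)/2}$. When $w(D') = w(D) - 2$, the term contributes $-\beta(D')$, and by the inductive hypothesis $\beta(D')$ is either zero or has sign $(-1)^{(w(D)-2)/2} = -(-1)^{w(D)/2}$; the extra minus sign flips this back to zero or $(-1)^{w(D)/2}$. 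Thus in each of (\ref{prop:delcon2})--(\ref{prop:delcon5}) both right-hand terms have coefficient of $x$ that is either zero or of sign $(-1)^{w(D)/2}$, which is the moreover clause; summing then gives the main statement for $\beta(D)$.

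The hard part is precisely this sign bookkeeping: the cancellation between the $-1$ produced by expanding $(x-1)$ or $(y-1)$ and the sign flip in $(-1)^{w/2}$ induced by the width dropping by $2$ is the substantive content. It is essential that Lemma~\ref{lem:widths} pairs a prefactor with a width drop in exactly the right way for all four cases to be handled uniformly; once this pairing is observed, the induction goes through with little additional effort.
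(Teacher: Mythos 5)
Your proposal is correct and follows essentially the same route as the paper: induction on $|E(D)|$ with the two-element base case drawn from the proof of Proposition~\ref{prop:x=y}, the loop/coloop case dispatched via Corollary~\ref{cor:loopscoloops} and Lemma~\ref{lem:constant}, and the remaining cases handled by pairing the width drops of Lemma~\ref{lem:widths} with the $(x-1)$ and $(y-1)$ prefactors in Proposition~\ref{prop:delcon}. The only difference is cosmetic: you treat the four cases uniformly via the prefactor/width-drop correspondence, whereas the paper works through them one at a time.
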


\begin{proof}
We proceed by induction on the number of elements of $D$. In the proof of Proposition~\ref{prop:x=y}, we showed that if $D$ has two elements then either  $\beta(D)=0$, $D$ has width zero and $\beta(D)=1$, or $D$ has width two and $\beta(D)=-1$. So the result holds in this case.

Now suppose that $D$ has at least three elements and $e$ is an element of $D$. If $e$ is either a loop or a coloop, then by Corollary~\ref{cor:loopscoloops} and Lemma~\ref{lem:constant}, $\beta(D)=0$. From now on, we assume that $e$ is neither a loop nor a coloop. We first prove the assertion about the coefficient of $x$ in any term appearing on the right side of the equation in a part of Proposition~\ref{prop:delcon}.

If $e$ is ordinary in both $D$ and $D^*$, then by Lemma~\ref{lem:widths}, $w(D)=w(D\ba e)=w(D/e)$, so the result follows from the inductive hypothesis.

If $e$ is an orientable ribbon loop in $D$ and ordinary in $D^*$, then \[ T(D;x,y) = T(D\ba e;x,y) + (y-1)T(D/e;x,y). \]
By Lemma~\ref{lem:widths}, $w(D\ba e)=w(D)$ and $w(D/e)=w(D)-2$. As $D\ba e$ and $D/e$ have at least two elements, Lemma~\ref{lem:constant} and the inductive hypothesis imply that the coefficient of $x$ in $T(D\ba e;x,y)$ is either zero or has the same sign as $(-1)^{w(D\ba e)/2}=(-1)^{w(D)/2}$
and the coefficient of $x$ in $(y-1)T(D/e;x,y)$ is either zero or has the same sign as
 $-(-1)^{w(D/e)/2} = (-1)^{w(D)/2}$, so the result follows from the inductive hypothesis.

The case where $e$ is ordinary in $D$ and an orientable ribbon loop in $D^*$ follows either by a very similar argument to the previous case or by applying the previous case to $D^*$ and using Propositions~\ref{prop:x=y} and~\ref{prop:tuttedual}.

Finally, if $e$ is an orientable ribbon loop in both $D$ and $D^*$, then
\[ T(D;x,y) = (x-1)T(D\ba e;x,y) + (y-1)T(D/e;x,y). \]
By Lemma~\ref{lem:widths}, $w(D\ba e)=w(D/e)=w(D)-2$. As both $D\ba e$ and $D/e$ have at least two elements, Lemma~\ref{lem:constant} and the inductive hypothesis imply that the coefficient of $x$ in each of $(x-1)T(D\ba e;x,y)$ and $(y-1)T(D/e;x,y)$ is either zero or has the same sign as $-(-1)^{w(D/e)/2} = (-1)^{w(D)/2}$, so the result follows from the inductive hypothesis.

By using Proposition~\ref{prop:delcon}, it follows immediately that
$\beta(D)$ is either zero or has the same sign as $(-1)^{w(D)/2}$. Hence the result follows by induction.\end{proof}

Bouchet proved the following theorem in the more general context of tight multimatroids. In a sense that is made precise in~\cite{Bouchet:MM3}, even delta-matroids are equivalent to a subclass of tight multimatroids.
\begin{theorem}
\label{thm:Bouchetchain}
Let $D$ be a connected even delta-matroid with element $e$. Then at least one of $D\ba e$ and $D/e$ is connected.
\end{theorem}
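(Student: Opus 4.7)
The plan is to proceed by contradiction, assuming that both $D\ba e$ and $D/e$ are disconnected. Write $D\ba e = A_1 \oplus A_2$ with ground sets $X_1,X_2$ partitioning $E-e$, and $D/e = B_1 \oplus B_2$ with ground sets $Y_1,Y_2$ partitioning $E-e$, all four being non-empty. Every feasible set of $D$ is then of one of two shapes: $F_1 \cup F_2$ with $F_i \in \mathcal F(A_i)$ (for sets avoiding $e$), or $\{e\} \cup G_1 \cup G_2$ with $G_i \in \mathcal F(B_i)$ (for sets containing $e$). Since $D$ is connected, $e$ is neither a loop nor a coloop, so feasible sets of both types exist.

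The key tool is the symmetric exchange axiom applied to a pair $F, F'$ of feasible sets of $D$ with $e \notin F$ and $e \in F'$. We have $e \in F \triangle F'$, so there exists $f \in F \triangle F'$ with $F \triangle \{e,f\} \in \mathcal F(D)$. Because $D$ is even, $|F|$ and $|F'|$ have the same parity, and hence $f=e$ is ruled out automatically: otherwise $F \cup \{e\}$ would be feasible, in contradiction to evenness. I would then split into two cases. \emph{Case~1:} $\{X_1,X_2\}=\{Y_1,Y_2\}$, relabelled so that $X_i=Y_i$. Iterating the exchange axiom, each time transferring a single non-$e$ element within $X_1$ or within $X_2$, forces the equality $\mathcal F(A_i)=\mathcal F(B_i)$ for some $i$; one may then place $e$ into the other part and verify directly that $D = D_1 \oplus D_2$ for suitable $D_1, D_2$, contradicting connectivity of $D$. \emph{Case~2:} the partitions cross, so all four intersections $X_i \cap Y_j$ are non-empty. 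The aim here is to select $F$ and $F'$ so that their symmetric difference is concentrated on a single off-diagonal intersection such as $X_1 \cap Y_2$; the exchange axiom then delivers an element $f$ in this intersection, and the resulting feasible set $F \triangle \{e,f\}$ fails to decompose along either of the two partitions, contradicting one of the supposed splittings.

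The main obstacle will be Case~2: arranging $F$ and $F'$ so that the element $f$ produced by the symmetric exchange axiom is forced into an off-diagonal intersection requires careful combinatorial bookkeeping, and turning the resulting feasible set into a concrete contradiction uses evenness in a subtle way to control the parities of the pieces on each side of the $Y$-partition. The cleanest route is in fact to appeal directly to Bouchet's theorem for tight multimatroids, using the equivalence between even delta-matroids and a class of tight multimatroids established in~\cite{Bouchet:MM3}; this translation bypasses the combinatorial argument entirely and is the approach I would adopt in practice.
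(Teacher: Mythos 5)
The paper offers no proof of this theorem at all: it is imported verbatim from Bouchet's chain theorem for tight multimatroids, using the equivalence between even delta-matroids and a subclass of tight multimatroids, which is precisely the route you say you would adopt in practice, so your final approach coincides with the paper's. Your preliminary direct sketch is not a substitute for that citation --- in Case~1 the claim that iterating the exchange axiom ``forces'' $\mathcal F(A_i)=\mathcal F(B_i)$, and in Case~2 the selection of $F$ and $F'$ concentrating the symmetric difference in one block, are exactly the hard content and are only asserted --- but since you explicitly fall back on Bouchet's result, this does not affect the correctness of your proposal.
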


We now obtain the following property of $\beta$ generalizing a result of Crapo~\cite{Crapo} for matroids.
\begin{theorem}\label{thm:betaconn}
Let $D$ be an even delta-matroid with at least two elements. Then
$\beta(D) \ne 0$ if and only if $D$ is connected. Moreover if $D$ is connected, then the sign of $\beta(D)$ is the same as that of $(-1)^{w(D)/2}$.
\end{theorem}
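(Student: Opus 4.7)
The plan is to handle the two directions separately. For the easy direction, suppose $D = D_1 \oplus D_2$ with $E_1, E_2 \ne \emptyset$. Proposition~\ref{prop:directsum} gives $T(D;x,y) = T(D_1;x,y)\, T(D_2;x,y)$, and Lemma~\ref{lem:constant} forces $T(D_i;0,0) = 0$ for $i=1,2$. Since each factor has zero constant term, the coefficient of $x$ in their product is also zero, so $\beta(D) = 0$. In particular, a loop or a coloop of $D$ induces a direct-sum decomposition of $D$, so a connected delta-matroid with at least two elements has no loops or coloops.

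The converse proceeds by induction on $|E(D)|$. The base case $|E(D)| = 2$ has already been handled in the proof of Proposition~\ref{prop:x=y}: the connected possibilities give $\beta(D) = 1$ (when $w(D) = 0$) and $\beta(D) = -1$ (when $w(D) = 2$), both matching the sign $(-1)^{w(D)/2}$. For the inductive step, fix a connected even $D$ with $|E(D)| \ge 3$ and pick any element $e$, which by the above is neither a loop nor a coloop. Apply the appropriate case (\ref{prop:delcon2})--(\ref{prop:delcon5}) of Proposition~\ref{prop:delcon} and extract the coefficient of $x$; since Lemma~\ref{lem:constant} kills the constant terms of $T(D\ba e)$ and $T(D/e)$, this yields an identity of the shape $\beta(D) = \varepsilon_1 \beta(D \ba e) + \varepsilon_2 \beta(D/e)$ with explicit signs $\varepsilon_i \in \{+1, -1\}$. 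Lemma~\ref{lem:widths} records that each of $w(D \ba e)$ and $w(D/e)$ equals either $w(D)$ or $w(D) - 2$ depending on which case applies, and in every case the signs $\varepsilon_i$ are arranged so that both $\varepsilon_1 \beta(D \ba e)$ and $\varepsilon_2 \beta(D/e)$, whenever nonzero, carry the common sign $(-1)^{w(D)/2}$ guaranteed by Proposition~\ref{prop:keysignsstuff}. Hence the two terms cannot cancel.

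To finish, Theorem~\ref{thm:Bouchetchain} ensures that at least one of $D \ba e$ and $D/e$ is connected; both minors are even and have at least two elements, so the inductive hypothesis supplies a nonzero $\beta$ for that minor. Therefore $\beta(D) \ne 0$, and its sign is $(-1)^{w(D)/2}$, as claimed. The main obstacle is the sign-compatibility check in the previous paragraph: one has to verify that the $-2$ shifts in width under deletion and contraction (Lemma~\ref{lem:widths}) interact correctly with the $\pm 1$ prefactors coming out of Proposition~\ref{prop:delcon} so that no cancellation occurs. This bookkeeping is already packaged into Proposition~\ref{prop:keysignsstuff}, so no fresh sign analysis is required and the theorem follows from the direct-sum factorisation, the deletion-contraction recurrence, and Bouchet's chain theorem.
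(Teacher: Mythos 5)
Your proof is correct and follows essentially the same route as the paper's: induction on the number of elements, with the base case taken from the proof of Proposition~\ref{prop:x=y}, Bouchet's chain theorem (Theorem~\ref{thm:Bouchetchain}) supplying a connected minor, and Proposition~\ref{prop:keysignsstuff} ruling out cancellation between the two terms of the deletion-contraction recurrence. The only difference is that you spell out two small points the paper leaves implicit (that a loop or coloop forces a direct-sum decomposition, and the explicit extraction of the coefficient of $x$ from each case of Proposition~\ref{prop:delcon}), which is harmless.
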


\begin{proof}
If $D=D_1\oplus D_2$ then $T(D;x,y)=T(D_1;x,y)\,T(D_2;x,y)$, so by Lemma~\ref{lem:constant} if $D$ is disconnected then $\beta(D)=0$.

To prove the converse, we proceed by induction on the number of elements of $D$. If $D$ has two elements and is connected, then from the proof of Proposition~\ref{prop:x=y} we see that
$T(D;x,y)=x+y$ or $T(D;x,y)= 2xy-x-y$, so the result holds when $D$ has two elements.

Now suppose that the result holds for all even delta-matroids with fewer than $n$ elements. Let $D$ be a connected, even delta-matroid having $n>2$ elements.

As $D$ is connected, it has no loop or coloop. Let $e$ be an element of $D$.
By Theorem~\ref{thm:Bouchetchain} at least one of $D\ba e$ and $D/e$ is connected. Then the induction hypothesis implies that
at least one of $\beta(D\ba e)$ and $\beta(D/e)$ is non-zero. Combining this observation with Propositions~\ref{prop:delcon} and~\ref{prop:keysignsstuff}, we deduce that $\beta(D)\ne 0$. Applying Proposition~\ref{prop:keysignsstuff} again, we see that the sign of $\beta(D)$ is the same as that of $(-1)^{w(D)/2}$. Hence the result follows by induction.
\end{proof}

\begin{corollary}
For an even delta-matroid $D$, the following hold.
\begin{enumerate}
    \item $T(D;x,y)$ is divisible by $x^i$ if and only if $D$ has at least $i$ coloops.
    \item $T(D;x,y)$ is divisible by $y^j$ if and only if $D$ has at least $j$ loops.
\end{enumerate}
\end{corollary}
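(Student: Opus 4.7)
My plan is to prove both parts simultaneously by decomposing $D$ into connected components and then pinning down exactly which components can contribute an $x$- or $y$-factor to $T(D;x,y)$. The key ingredients are Corollary \ref{cor:loopscoloops}, Proposition \ref{prop:directsum}, Proposition \ref{prop:x=y}, and Theorem \ref{thm:betaconn}. The easy direction of part~(1) is immediate from Corollary \ref{cor:loopscoloops}: if $D$ has $k$ coloops then iterating the corollary gives $T(D;x,y) = x^k T(D';x,y)$ for the delta-matroid $D'$ obtained by contracting them all (noting that contracting one coloop preserves the remaining coloops), and the analogous argument with $y$ handles the easy direction of part~(2).

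For the converse I would write $D = C \oplus L \oplus D^0$, where $C$ is the direct sum of the $k$ single-coloop components, $L$ is the direct sum of the $l$ single-loop components, and $D^0$ is the direct sum of the remaining connected components, each of which has at least two elements. This decomposition is valid because a single-element connected component of an even delta-matroid cannot have both $\emptyset$ and $\{e\}$ as feasible sets (that would violate evenness), so it must be a single loop or a single coloop; conversely, a connected component with at least two elements can contain no loop or coloop, since such an element would split off as its own direct summand. By Proposition \ref{prop:directsum} we then have $T(D;x,y) = x^k y^l T(D^0;x,y)$, reducing the problem to showing that $T(D^0;x,y)$ is divisible by neither $x$ nor $y$.

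Since $T(D^0;x,y)$ is a product of the Tutte polynomials of its components $D^0_j$, it suffices to show $x,y \nmid T(D^0_j;x,y)$ for each connected even delta-matroid $D^0_j$ with at least two elements. By Theorem \ref{thm:betaconn}, $\beta(D^0_j) \ne 0$, and by Proposition \ref{prop:x=y} this number is simultaneously the $x^1 y^0$ and $x^0 y^1$ coefficient of $T(D^0_j;x,y)$. Therefore $T(D^0_j;0,y)$ contains the monomial $\beta(D^0_j)\,y$ and is a nonzero polynomial in $y$, so $x \nmid T(D^0_j;x,y)$; symmetrically $y \nmid T(D^0_j;x,y)$. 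Taking products gives $T(D;x,y) = x^k y^l Q(x,y)$ with $x,y \nmid Q$, and so $x^i \mid T(D;x,y)$ exactly when $i \le k$, proving part~(1); part~(2) follows either from the symmetric argument in $y$ or by duality via Proposition \ref{prop:tuttedual}, since $D^*$ is even and loops of $D$ are coloops of $D^*$. The main obstacle is the converse direction, which rests entirely on the combination of Proposition \ref{prop:x=y} and Theorem \ref{thm:betaconn}: together they promote the nonvanishing of the single coefficient $\beta(D^0_j)$ into the nonvanishing of both $T(D^0_j;0,y)$ and $T(D^0_j;x,0)$, which is exactly what blocks any hidden $x$- or $y$-factors from sneaking in through the non-trivial connected components.
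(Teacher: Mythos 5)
Your proposal is correct and follows essentially the same route as the paper: peel off loops and coloops as $x$- and $y$-factors via Corollary~\ref{cor:loopscoloops} and Proposition~\ref{prop:directsum}, then use Theorem~\ref{thm:betaconn} together with Proposition~\ref{prop:x=y} to see that each remaining connected component (necessarily with at least two elements) has nonzero coefficients of both $x$ and $y$, so its Tutte polynomial is divisible by neither. Your explicit check that evenness forces every single-element component to be a loop or a coloop is a detail the paper leaves implicit, but the argument is the same.
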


\begin{proof}
If $D$ has at least $i$ coloops, then Corollary~\ref{cor:loopscoloops} implies that $T(D;x,y)$ is divisible by $x^i$. Similarly if $D$ has at least $j$ loops, then $T(D;x,y)$ is divisible by $y^j$.

Now suppose that $D$ has $i$ loops, $j$ coloops and $k$ components $D_1,\ldots,D_k$ that are neither loops nor coloops. Then
\[ T(D;x,y) = x^i\,y^j \,T(D_1;x,y) \cdots T(D_k;x,y).\]
For each $l$, $D_l$ has at least two elements, so Proposition~\ref{prop:x=y} and Theorem~\ref{thm:betaconn} imply that $T(D_l;x,y) = a_l(x+y) + p_l(x,y)$, where $a_l$ is a non-zero constant and $p_l$ is a polynomial in which every monomial has degree at least two. Thus none of $T(D_1), \ldots, T(D_k)$ is divisible by either $x$ or $y$. Hence the result follows.
\end{proof}

We say that a delta-matroid is \emph{series-parallel} if there is a plane 2-connected series-parallel network $\bG$
such that $D$ is a twist of $D(\bG)$. Series-parallel delta-matroids were introduced in~\cite{MMN}, where
twisted duals of $D(\bG)$ were also considered.

The next result is a reformulation of a result from~\cite{MMN}. We include a  proof for completeness. We recall that $U_{2,4}$ and $M(K_4)$ denote, respectively, the matroid with four elements in which a set is independent if and only if it has size at most two and the cycle matroid of the complete graph with four vertices. We use $\NP$ to denote the delta-matroid on three elements in which a set is feasible if and only if it has even~size.

\begin{theorem}\label{thm:serpar}
Let $D$ be an even delta-matroid with at least two elements.
Then the following are equivalent.
\begin{enumerate}
    \item $D$ is series-parallel.
    \item $\beta(D)=(-1)^{w(D)/2}$.
\end{enumerate}
\end{theorem}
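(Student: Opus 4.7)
The plan is to prove the equivalence by induction on $|E(D)|$. The base case is $|E(D)|=2$, where the analysis in the proof of Proposition~\ref{prop:x=y} identifies the two connected even delta-matroids on two elements: $D(C_2)$ with $T=x+y$, $w=0$, $\beta=1$, and its nontrivial twist with $T=2xy-x-y$, $w=2$, $\beta=-1$. Both satisfy $\beta(D)=(-1)^{w(D)/2}$ and are series-parallel, so the equivalence holds.

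For the forward direction $(1)\Rightarrow(2)$ in the inductive step, suppose $D=D(\bG)*A$ where $\bG$ is a plane 2-connected series-parallel network with more than two edges. Such a $\bG$ contains an edge $e$ parallel to some edge $f$ (a 2-circuit in the cycle matroid $D(\bG)$) or in series with $f$ (a 2-cocircuit). Depending on whether $e,f$ lie in $A$, the element $e$ is ordinary or an orientable ribbon loop in $D$ and/or $D^*$, and in each case one of $D\ba e$ or $D/e$ is itself a twist of a plane 2-connected series-parallel network on one fewer edge, obtained from a parallel or series reduction of $\bG$. The corresponding case of Proposition~\ref{prop:delcon} expresses $T(D;x,y)$ as a linear combination of $T(D\ba e;x,y)$ and $T(D/e;x,y)$. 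Tracking widths via Lemma~\ref{lem:widths} and signs via Proposition~\ref{prop:keysignsstuff}, each contributing term to the coefficient of $x$ in $T(D;x,y)$ carries sign $(-1)^{w(D)/2}$, and the inductive hypothesis pins down the magnitude as $1$.

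For the reverse direction $(2)\Rightarrow(1)$, assume $\beta(D)=(-1)^{w(D)/2}$. Theorem~\ref{thm:betaconn} then gives that $D$ is connected. The goal is to exhibit an element $e$ for which one of $D\ba e$ or $D/e$ is a connected even delta-matroid with $|\beta|=1$; the inductive hypothesis then makes this reduced delta-matroid series-parallel, and reversing the reduction (adding $e$ back as a parallel or series element in the underlying plane network, possibly together with a compensating adjustment of the twist set) produces $D$ as a series-parallel delta-matroid. The existence of such an $e$ is argued by contradiction: if every element of $D$ were \emph{non-reducible}, then $D$ would contain one of the minor-minimal non-series-parallel even delta-matroids among $U_{2,4}$, $M(K_4)$, and $\NP$. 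A direct calculation shows each of these has $|\beta|\geq 2$; combined with the same-sign property of Proposition~\ref{prop:keysignsstuff}, the deletion-contraction recurrence of Proposition~\ref{prop:delcon} forces $|\beta(D)|\geq 2$, contradicting $|\beta(D)|=1$.

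The main obstacle is the reverse direction, specifically the extraction of a series-parallel reducible element from the beta hypothesis. This requires case analysis based on the type of $e$ in $D$ and $D^*$ (ordinary vs.\ orientable ribbon loop), and relies on Theorem~\ref{thm:Bouchetchain} to control which of $D\ba e$, $D/e$ remains connected. The tight constraint $|\beta(D)|=1$ is exploited via Proposition~\ref{prop:keysignsstuff}: since all nonzero contributions to $\beta(D)$ from $\beta(D\ba e)$ and $\beta(D/e)$ share the sign $(-1)^{w(D)/2}$, their magnitudes add, so exactly one of them can be nonzero. This, combined with Theorem~\ref{thm:betaconn}, determines the connectivity of the reductions and lets the inductive hypothesis drive the series-parallel conclusion.
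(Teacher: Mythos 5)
Your base case and forward direction follow the paper's approach, though in the forward direction you should state explicitly that the reduction of $\bG$ which is \emph{not} a smaller $2$-connected series-parallel network acquires a loop or coloop and hence contributes $0$ to the coefficient of $x$; without that, the same-signed contributions of Proposition~\ref{prop:keysignsstuff} would only give $|\beta(D)|\geq 1$, not $=1$. The genuine gap is in the reverse direction. You frame it as an induction whose key step is: find $e$ such that one of $D\ba e$, $D/e$ is connected with $|\beta|=1$, invoke the inductive hypothesis to make that minor series-parallel, and then ``reverse the reduction''. That last step is never justified, and it is where all the difficulty lives: knowing that $D\ba e$ is a twist of $D(\bH)$ for some plane $2$-connected series-parallel network $\bH$ tells you nothing about how $e$ attaches inside $D$, so there is no way to ``add $e$ back as a parallel or series element, possibly with a compensating adjustment of the twist set''. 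Worse, the ``existence of a reducible $e$'' that you argue for by contradiction is automatic: by Proposition~\ref{prop:keysignsstuff} the two contributions to $\beta(D)$ have coherent signs, so $|\beta(D)|=1$ forces exactly one of $\beta(D\ba e)$, $\beta(D/e)$ to equal $\pm 1$, and Theorem~\ref{thm:betaconn} then makes that minor connected --- and this holds for \emph{every} non-loop, non-coloop $e$. So your contradiction argument establishes something that was never in doubt, while the real obstruction remains open.

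The paper's reverse direction needs no induction. If $D$ is not series-parallel then (by Duchamp's Proposition~1.5 in the non-matroid case, or the excluded-minor characterisation of series-parallel matroids after untwisting) $D$ has a minor $H$ that is a twist of $\NP$, $U_{2,4}$ or $M(K_4)$; since $D$ is connected, a connectivity-preserving minor-chain theorem (Corollary~3.3 of the splitter-type result the paper cites) gives a chain of \emph{connected} minors from $D$ down to $H$, and applying Proposition~\ref{prop:keysignsstuff} along the chain yields $|\beta(D)|\geq|\beta(H)|\geq 2$, contradicting $|\beta(D)|=1$. You already possess every ingredient of this --- the three excluded minors, the bound $|\beta(H)|\geq 2$, and the sign coherence --- but you aim the contradiction at ``no reducible element exists'' instead of at ``$D$ is not series-parallel''. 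Note also that Theorem~\ref{thm:Bouchetchain} alone does not suffice for the chain argument: it says one of $D\ba e$, $D/e$ is connected, but not that the connected one still contains $H$ as a minor.
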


\begin{proof}
Suppose the first condition holds. Then there is a plane 2-connected series-parallel network $\bG$ such that $D$ is a twist of $D(\bG)$.
We proceed by induction on the number of edges in $\bG$.
If $\bG$ has only two edges, then
$\bG$ is a cycle with two edges and
the result is easy to check. Otherwise $\bG$ has an edge $e$ such that one of $\bG\ba e$ and $\bG/e$ contains a coloop or a loop, and the other is a plane 2-connected series-parallel network with at least two edges. Thus one of $D\ba e$ or $D/e$ contains a loop or a coloop, and the other is series-parallel.
So one of $\beta(D\ba e)$ and $\beta(D/e)$ is zero, and the inductive hypothesis together with Proposition~\ref{prop:delcon} and Lemma~\ref{lem:constant} imply that the other is~$\pm 1$.
Hence $\beta(D)=\pm 1$, and Theorem~\ref{thm:betaconn} determines the sign to be as stated in the second~condition.

Now suppose that $\beta(D)=(-1)^{w(D)/2}$.
 Suppose for a contradiction that $D$ is not series-parallel. Then either $D$ is not the twist of a matroid or it is the twist of a matroid $M$ that is not series-parallel. In the former case, by Proposition~1.5 of~\cite{Duchamp}, $D$ contains a twist of $\NP$ as a minor. In the latter case, by, for example, Corollary~12.2.14 of~\cite{Oxley}, $M$ contains $U_{2,4}$ or $M(K_4)$ as a minor, so $D$ contains a twist of either $U_{2,4}$ or $M(K_4)$ as a minor. Thus we see that $D$ contains a twist of either $\NP$, $U_{2,4}$ or $M(K_4)$ as a minor.
 Let $H$ denote such a minor.
As $D$ is connected (by Theorem~\ref{thm:betaconn}), we may apply Corollary~3.3 of~\cite{CCN:splitter} to find a sequence $D=D_0, D_1, \ldots, D_{k-1}, D_k=H$ of connected minors of $D$ such that for each $i$, $D_i$ is obtained from $D_{i-1}$ by deleting or contracting a single element $e_i$.
We calculate $T(D)$ by deleting or contracting each element $e_i$ in increasing order of $i$ and using Theorem~\ref{prop:delcon}.
As each of $D_0,D_1\ldots,D_{k-1}$ is connected, none of $e_1,e_2,\ldots,e_k$ is a loop or coloop.
Therefore we may apply Proposition~\ref{prop:keysignsstuff} to deduce that the contributions to $\beta$ from the two terms on the right side of the appropriate deletion-contraction relations never have opposing signs. Thus $|\beta(D)| \geq |\beta(H)|$.
It is easy to check that if $H$ is a twist of $U_{2,4}$, $\NP$ or $M(K_4)$, and $e$ is an element of $H$, then both $H\ba e$ and $H/e$ are connected. Hence by Proposition~\ref{prop:keysignsstuff} and Theorem~\ref{thm:betaconn}, $|\beta(H)| \geq 2$. So $|\beta(D)|\geq 2$, and the result follows.
\end{proof}

\section{Irreducibility of $T(D)$}

In this section we prove our two main results, which characterise when $R(\bG)$ and $T(D)$ are irreducible. Our approach adapts the argument from~\cite{Merino}.

For a delta-matroid $D$, we let $b_{i,j}(D)$ denote the coefficient of $x^iy^j$ in $T(D;x,y)$. When the context is clear, we just write $b_{i,j}$. By the duality formula of Proposition~\ref{prop:tuttedual},  for every delta-matroid $D$ we have $b_{i,j}(D^*)=b_{j,i}(D)$.

Brylawski~\cite{Bry} established a collection of affine relations satisfied by the coefficients of the Tutte polynomial of a matroid. Much later, in a surprising result, Gordon~\cite{Gordon} demonstrated that these affine relations hold much more generally.

For a delta-matroid with element set $E$, we have $\sigma(\emptyset)=0$, and for any subset $A$ of $E$ we have $\sigma(A) \leq \max \{|A|,\sigma(E)\}$. These conditions are sufficient to apply Theorem~11 of~\cite{Gordon} to show that all of Brylawski's affine relations hold for $T(D)$, giving the following.

\begin{theorem}\label{thm:Bry}
Let $D$ be an even delta-matroid with element set $E$ and let $n=|E|$. Then,
for all $k$ with $0\leq k < n$,
    \[ \sum_{i=0}^k \sum_{j=0}^{k-i} (-1)^j \binom{k-i}j b_{i,j}=0,\]
and
\[ \sum_{i=0}^n \sum_{j=0}^{n-i} (-1)^j \binom{n-i}j b_{i,j}=(-1)^{n-\sigma(E)}.\]
\end{theorem}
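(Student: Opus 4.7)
The proof is essentially a direct appeal to Theorem~11 of~\cite{Gordon}, which establishes Brylawski's full family of affine identities for any corank--nullity style polynomial $\sum_{A\subseteq E}(x-1)^{\sigma(E)-\sigma(A)}(y-1)^{|A|-\sigma(A)}$ whose rank-like function $\sigma$ satisfies the two structural conditions $\sigma(\emptyset)=0$ and $\sigma(A)\leq \max\{|A|,\sigma(E)\}$ for every $A\subseteq E$. The plan is therefore to verify these two conditions for the delta-matroid invariant $\sigma(A)=\tfrac{1}{2}(r((D|A)_{\max})+r((D|A)_{\min}))$ and then quote Gordon's result.

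First I would dispose of $\sigma(\emptyset)=0$: when $A=\emptyset$, both $(D|\emptyset)_{\max}$ and $(D|\emptyset)_{\min}$ are matroids on the empty ground set and so have rank zero. For the second condition, both $(D|A)_{\max}$ and $(D|A)_{\min}$ are matroids with ground set $A$, so their ranks lie in $\{0,1,\ldots,|A|\}$; averaging gives $\sigma(A)\leq |A|\leq \max\{|A|,\sigma(E)\}$. Note that neither check uses any feature of delta-matroids beyond the fact that their minimum and maximum matroids exist, which is where the simplicity of this argument comes from.

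The remaining subtlety is the role of the evenness hypothesis. Gordon's identities are about the coefficients $b_{i,j}$ of integer monomials $x^iy^j$, so we need $\sigma(A)\in\mathbb{Z}$ for every $A\subseteq E$ in order for $T(D;x,y)$ to be a genuine bivariate polynomial with integer exponents. This is exactly the content of evenness: since restrictions of an even delta-matroid are even, $r((D|A)_{\max})$ and $r((D|A)_{\min})$ have the same parity, so their sum is even and $\sigma(A)$ is an integer. With the two Gordon conditions verified and integrality of $\sigma$ secured, Theorem~11 of~\cite{Gordon} applied with $0\leq k<n$ produces the first family of identities, and applied at $k=n$ produces the boundary identity with right-hand side $(-1)^{n-\sigma(E)}$.

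The main (and only) obstacle is really just the bookkeeping of matching Gordon's conventions to ours: checking that his abstract rank function plays exactly the role of $\sigma$ and that the sign and normalisation on the right-hand side of the $k=n$ identity come out to $(-1)^{n-\sigma(E)}$ rather than, say, its negative. No delta-matroid specific structural argument (symmetric exchange, Bouchet's chain theorem, etc.) is needed, which is precisely why the statement can be harvested as a black-box corollary of~\cite{Gordon}.
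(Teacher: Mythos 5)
Your proposal is correct and follows exactly the paper's route: the paper likewise just verifies $\sigma(\emptyset)=0$ and $\sigma(A)\leq\max\{|A|,\sigma(E)\}$ and then invokes Theorem~11 of Gordon as a black box. Your additional remark on why evenness is needed (integrality of $\sigma(A)$, hence genuine integer exponents) is a sensible clarification that the paper leaves implicit.
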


\begin{lemma}\label{lem:topcoeffsR}
Let $D$ be an even delta-matroid with element set $E$. If $i>\sigma(E)$ or $j>|E|-\sigma(E)$, then $b_{i,j}=0$. Moreover,
\[ \sum_{j=0}^{|E|-\sigma(E)} b_{\sigma(E),j} =
\sum_{i=0}^{\sigma(E)} b_{i,|E|-\sigma(E)} = 1.\]
\end{lemma}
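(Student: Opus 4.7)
The plan is to derive both assertions directly from the defining sum~\eqref{edef} for $T(D)$, using only two basic structural properties of $\sigma$: for every $A \subseteq E$,
\[ 0 \le \sigma(A) \le \sigma(E) \qquad \text{and} \qquad |A| - \sigma(A) \le |E| - \sigma(E). \]
The bound $\sigma(A) \ge 0$ is immediate from $\sigma(A) = \tfrac{1}{2}(r((D|A)_{\min}) + r((D|A)_{\max}))$. To prove the remaining two inequalities I would induct on $|E\setminus A|$: writing $D|A = D\ba A^c$ as a sequence of single-element deletions, the case analysis of Lemma~\ref{lem:widths} (supplemented by the easy loop and coloop cases) shows that in an even delta-matroid each single deletion changes $\sigma$ by $0$ or $-1$ while always decreasing $|E|$ by $1$. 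Hence both $\sigma$ and $|E|-\sigma$ weakly decrease at every step; evenness is crucial here because it excludes non-orientable ribbon loops, leaving only the cases explicitly treated in Lemma~\ref{lem:widths}.

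With these inequalities in hand the first assertion is immediate: each summand in~\eqref{edef} has $x$-degree at most $\sigma(E) - \sigma(A) \le \sigma(E)$ and $y$-degree at most $|A| - \sigma(A) \le |E| - \sigma(E)$, forcing $b_{i,j} = 0$ whenever $i > \sigma(E)$ or $j > |E| - \sigma(E)$.

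For the leading-coefficient identity I would specialise $y = 1$ in~\eqref{edef}. Since $(y-1)^{|A|-\sigma(A)}\bigr|_{y=1}$ equals $1$ exactly when $|A| = \sigma(A)$ (with the convention $0^0=1$) and $0$ otherwise,
\[ T(D;x,1) = \sum_{\substack{A \subseteq E \\ \sigma(A)=|A|}} (x-1)^{\sigma(E)-|A|}. \]
Each summand has $x$-degree $\sigma(E) - |A|$, which is strictly less than $\sigma(E)$ unless $|A|=0$. Since $\sigma(\emptyset)=0=|\emptyset|$, the unique contribution to $[x^{\sigma(E)}]T(D;x,1)$ comes from $A=\emptyset$ and equals the leading coefficient of $(x-1)^{\sigma(E)}$, namely $1$. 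This gives $\sum_j b_{\sigma(E),j} = [x^{\sigma(E)}]T(D;x,1) = 1$. The companion identity $\sum_i b_{i,|E|-\sigma(E)} = 1$ follows by applying the same argument to the dual $D^*$ (which is also even, with $\sigma(D^*) = |E|-\sigma(D)$ since dualisation swaps the roles of $D_{\min}$ and $D_{\max}$) and invoking $T(D;1,y) = T(D^*;y,1)$ from Proposition~\ref{prop:tuttedual}.

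The main obstacle is the structural input on $\sigma$: proving that restriction never increases either $\sigma$ or $|E|-\sigma$ in an even delta-matroid. Lemma~\ref{lem:widths} handles the non-loop, non-coloop deletions, but one must additionally work through the loop and coloop cases and confirm that the per-step change of $\sigma$ really lies in $\{0,-1\}$ throughout any sequence of deletions realising $D|A$. Once this routine but multi-case verification is in place, the rest of the argument is a short calculation from~\eqref{edef}.
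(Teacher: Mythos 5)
Your proof is correct and follows essentially the same route as the paper: both assertions are read off directly from the state-sum~\eqref{edef}, with the leading-coefficient identity obtained by isolating the $A=\emptyset$ contribution to the coefficient of $x^{\sigma(E)}$ in $T(D;x,1)$ and the companion identity by duality. The only difference is one of detail: where the paper appeals tersely to ``the observation preceding Theorem~\ref{thm:Bry}'' for the degree bounds, you justify $0\le\sigma(A)\le\sigma(E)$ and $|A|-\sigma(A)\le|E|-\sigma(E)$ by tracking the change of $\sigma$ through single-element deletions via Lemma~\ref{lem:widths}, which is a more explicit but equivalent argument.
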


\begin{proof}
The first part follows from the definition of $T$ and the observation preceding Theorem~\ref{thm:Bry}. Notice that $\sum_{j=0}^{|E|-\sigma(E)} b_{\sigma(E),j}$ is equal to the coefficient of $x^{\sigma(E)}$ in
$T(D;x,1)$. Thus, as $\sigma(\emptyset)=0$,
\[\sum_{j=0}^{|E|-\sigma(E)} b_{\sigma(E),j}
= \sum_{A\subseteq E: \sigma(A)=0} (1-1)^{|A|} = 1.\]
The equation $\sum_{i=0}^{\sigma(E)} b_{i,|E|-\sigma(E)} = 1$ can be established by a similar argument or by using duality.
\end{proof}

We now prove our main result on the irreducibility of even delta matroids (see Theorem~\ref{main2}).
\begin{proof}[Proof of Theorem~\ref{main2}]
If $D$ is disconnected then there are non-empty delta-matroids $D_1$ and $D_2$ such that $D=D_1 \oplus D_2$. Proposition~\ref{prop:directsum} implies that $T(D)$ is not irreducible.

Now suppose that $D$ is connected. If $D$ has at most one element then $T(D;x,y)$ is clearly irreducible, so we may assume that $D$ has at least two elements.

Suppose there is a non-trivial factorization
\[ T(D;x,y) = A(x,y)\,C(x,y),\]
where $A(x,y)=\sum_{i,j} a_{i,j}x^iy^j$ and
$C(x,y)=\sum_{i,j} c_{i,j}x^iy^j$. As $D$ has at least one element, we have $a_{0,0}c_{0,0}=b_{0,0}=0$. Without loss of generality, we assume that $a_{0,0}=0$. As $D$ is connected with at least two elements, Theorem~\ref{thm:betaconn} implies that $b_{1,0}\ne 0$. We have $b_{1,0} = a_{1,0}c_{0,0} + a_{0,0}c_{1,0} = a_{1,0}c_{0,0}$, so $c_{0,0}\ne 0$ and $a_{1,0}\neq0$. Similarly $a_{0,1}\neq0$ since $b_{0,1}\ne 0$. We shall obtain a contradiction by proving that $c_{0,0}=0$.

For a polynomial $P(x,y) = \sum_{i,j} p_{i,j}x^i y^j$, define
\begin{align*} \deg_x(P) &= \max\{i: \text{there exists $j$ such that $p_{i,j} \ne 0$}\}\\
\shortintertext{and}
\deg_y(P) &= \max\{j: \text{there exists $i$ such that $p_{i,j} \ne 0$}\}.
\end{align*}
Now let $m(P)= \deg_x(P) + \deg_y(P)$.
Then we have $m(T(D))=m(A)+m(C)$. Furthermore, $\deg_x(A)< m(T(D))$ and $\deg_y(A)<m(T(D))$.

It follows from Lemma~\ref{lem:topcoeffsR} that
\[ 1 = \sum_{j} b_{\deg_x(T(D)),j} = \sum_k a_{\deg_x(A),k}
\sum_l c_{\deg_x(C),l}\]
and similarly, or by duality,
\[ 1 = \sum_{i} b_{i,\deg_y(T(D))} = \sum_k a_{k,\deg_y(A)}
\sum_l c_{l,\deg_y(C)}.\]
Thus,
\begin{equation}\label{eq:topcoeffsA}
\sum_k a_{\deg_x(A),k} \ne 0 \quad \text{and} \quad
\sum_k a_{k,\deg_y(A)} \ne 0.\end{equation}

Now, for $k=0,1,\ldots, m(A)$, let
\[ A_k = \sum_{s=0}^k \sum_{t=0}^{k-s} (-1)^t \binom{k-s}ta_{s,t},\]
and, for $\deg_x(A) \leq k \leq m(A)$ and $i\geq 0$, let
\[ A_{k,i} = \sum_{s=0}^k \sum_{t=0}^{k-s} (-1)^{t+i} \binom{k-s}t a_{s,t+i}.\]
Notice that $A_{k,0}=A_k$. We now prove a recurrence relation involving these quantities.
If $k>\deg_x(A)$ and $i>0$, then, using the identity $\binom{k-s}{t}= \binom{k-s-1}{t}+\binom{k-s-1}{t-1}$, we have
\begin{align*} A_{k,i-1} &=
\sum_{s=0}^{k} \sum_{t=0}^{k-s} (-1)^{t+i-1} \binom{k-s}t a_{s,t+i-1}\\
&= (-1)^{i-1} a_{k,i-1} +
\sum_{s=0}^{k-1} \sum_{t=0}^{k-1-s} (-1)^{t+i-1} \binom{k-1-s}t a_{s,t+i-1}\\
&\phantom{=} {} + \sum_{s=0}^{k-1} \sum_{t=1}^{k-s} (-1)^{t+i-1} \binom{k-1-s}{t-1} a_{s,t+i-1}.\end{align*}
As $k>\deg_x(A)$, we have $a_{k,i-1}=0$. The second and third terms in the equation above are equal to $A_{k-1,i-i}$ and $A_{k-1,i}$ respectively.  Thus,
\begin{equation} A_{k-1,i} = A_{k,i-1} - A_{k-1,i-1} \label{eqn:recurrenceforAij}\end{equation}
whenever $k>\deg_x(A)$ and $i>0$.
By Lemma~\ref{lem:topcoeffsR} and Equation~\eqref{eq:topcoeffsA},
\[ A_{\deg_x(A),\deg_y(A)} = \sum_{s=0}^{\deg_x(A)} (-1)^{\deg_y(A)}a_{s,\deg_y(A)} \ne 0.\]
By applying Equation~\eqref{eqn:recurrenceforAij} repeatedly we can write $A_{\deg_x(A),\deg_y(A)}$ as a linear combination of $A_{\deg_x(A),0}$, \ldots, $A_{m(A),0}$. As $A_{k,0}=A_k$ we see that there exists $k$ with $\deg_x(A) \leq k \leq m(A)$ such that $A_k \ne 0$.

The proof is completed by following exactly the proof of Lemma~4 from~\cite{Merino} to deduce that $c_{0,0}=0$ and thereby reach the desired contradiction.
\end{proof}

\medskip

We conclude by applying Theorem~\ref{main2}  to ribbon graphs.
A ribbon graph $\bG$ is said to be a \emph{join} of ribbon graphs $\bG_1$ and $\bG_2$ if
$\bG$ can be constructed by picking arcs on the boundaries of a non-isolated vertex in each of $\bG_1$ and $\bG_2$ (the arcs should not intersect any edges), then ``gluing together $\bG_1$ and $\bG_2$'' by identifying the arcs and merging the two vertices they lie on into a single vertex of $\bG$.
This definition is consistent with that of the join of embedded graphs given in the introduction.

\begin{theorem}\label{adshjk}
If $\bG$ is an orientable ribbon graph, then $R(\bG;x,y)$ is irreducible over $\mathbb{Z}[x,y]$ (or $\mathbb{C}[x,y]$) if and only if $\bG$ is not a disjoint union or  join of ribbon graphs.
\end{theorem}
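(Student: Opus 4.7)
The plan is to reduce Theorem \ref{adshjk} to Theorem \ref{main2} via the delta-matroid $D(\bG)$. Since $R(\bG;x,y)=T(D(\bG);x,y)$, the irreducibility of $R(\bG)$ over $\mathbb{Z}[x,y]$ (or $\mathbb{C}[x,y]$) is equivalent to that of $T(D(\bG))$. Theorem \ref{main2} then applies provided we establish two correspondences: (i) $D(\bG)$ is an even delta-matroid whenever $\bG$ is orientable, and (ii) $\bG$ is a disjoint union or join of ribbon graphs if and only if $D(\bG)$ decomposes as $D_1\oplus D_2$ with non-empty ground sets.

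For (i), I would use the identity $w(D(\bG))=\gamma(\bG)$ together with a parity check on feasible sets. Euler's formula applied to a spanning quasi-tree $Q$ of a connected orientable ribbon graph $\bG$ gives $|E(Q)|=r(\bG)+\gamma(Q)$, and since $\gamma(Q)=2g(Q)$ is even in the orientable case, all feasible sets of $D(\bG)$ share the parity of $r(\bG)$. The disconnected case follows component-wise, so $D(\bG)$ is even.

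For (ii), the forward direction is the standard multiplicativity property of $R$ cited in the introduction: a disjoint union or join of ribbon graphs induces a direct sum decomposition of the associated delta-matroids, because in the join each spanning quasi-tree of $\bG$ restricts uniquely to a spanning quasi-tree on each side of the shared vertex. The converse, that a non-trivial direct sum decomposition $D(\bG)=D_1\oplus D_2$ forces $\bG$ to be a disjoint union or join, is the structural content and is the main obstacle.

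The key technical step is thus the converse: given a partition $E=E_1\sqcup E_2$ of the edges of $\bG$ with respect to which the feasible sets of $D(\bG)$ factor, one must show the ribbon subgraphs $\bG_i:=(V,E_i)$ meet in at most one vertex of $\bG$, and when they share a vertex $v$, that the edges of $E_1$ and $E_2$ appear as two separate cyclic blocks in the boundary cycle at $v$, so that the two halves can genuinely be re-assembled by the join operation. This is the ribbon-graph analogue of the classical correspondence between 1-separations of the cycle matroid and cut-vertex separations of the graph. I would establish it either by invoking a structural result for ribbon-graphic delta-matroids from the literature (for instance in \cite{CMNR-JCT} or \cite{Msurvey}), or directly by analysing bases of $D_{\min}$ and $D_{\max}$ restricted to each part: the spanning quasi-trees witnessing the separation force a topological cut point whose neighborhood in the embedding splits cleanly between $E_1$ and $E_2$, yielding the required disjoint-union or join presentation of $\bG$.
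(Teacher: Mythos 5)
Your proposal is correct and follows essentially the same route as the paper: reduce to Theorem~\ref{main2} via $R(\bG;x,y)=T(D(\bG);x,y)$, using the facts that $\bG$ is orientable if and only if $D(\bG)$ is even and that $D(\bG)$ is connected if and only if $\bG$ is not a disjoint union or join. The paper simply cites Propositions~5.3 and~5.22 of \cite{CMNR-JCT} for these two correspondences (the latter being exactly the ``main obstacle'' you identify), so your fallback of invoking the literature is precisely what is done.
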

\begin{proof}
By Proposition~5.22 of \cite{CMNR-JCT}, the delta-matroid $D(\bG)$ of a ribbon graph $\bG$ is connected if and only if $\bG$ is not a disjoint union or  join of ribbon graphs. By Proposition~5.3 of~\cite{CMNR-JCT}, $\bG$ is  orientable if and only if $D(\bG)$ is even. The result then follows from Theorem~\ref{main2} upon noting that $R(\bG;x,y)=T(D(\bG);x,y)$.
\end{proof}
Theorem~\ref{main} is obtained from Theorem~\ref{adshjk} by rewording it in terms of embedded graphs.

\bibliographystyle{amsplain}
\bibliography{irred.bib}
\end{document}